\newcommand{\pgfextractangle}[3]{%
	\pgfmathanglebetweenpoints{\pgfpointanchor{#2}{center}}
		{\pgfpointanchor{#3}{center}}
	\global\let#1\pgfmathresult  
}
\newcommand{\CNK}[3]	{
\begin{tikzpicture}[scale=#3] 
\def\n{#1} 
\def\k{#2}
\edef\m{\number\numexpr(\n-1)/2\relax}
\edef\l{\number\numexpr \k-1 \relax}
\edef\j{\number\numexpr 2*\n-1 \relax}
\draw (0,0) circle (1);
\foreach \z in {0,...,\j}	{\foreach \y in {0,...,\l}	{\draw (\z*180/\n:1) \foreach \x in {1,...,\m}	{arc(\z*180/\n+(2*\x-1)*180/\n-\y*180/\n/\k:\z*180/\n+2*\x*180/\n-\y*180/\n/\k:1)};}}
\end{tikzpicture}
}
\newcommand{\CNKTa}[3]	{	
\begin{tikzpicture}[scale=#3]
\def\k{#1}
\def\t{#2}
\edef\m{\number\numexpr \k-1 \relax}
\draw (0,0) circle (1);
\foreach \x in {1,...,6}	{
\draw (60*\x:1) -- ++(60*\x+180:\t) -- ++(60*\x+60:\t) arc (60*\x+60:60*\x+120:1) -- (0,0);
\foreach \y in {1,...,\m} {\draw (60*\x:1-\t) -- ++(60*\x+\y*60/\k:\t) arc(60*\x+\y*60/\k:60*\x+\y*60/\k+60:1) -- ++(60*\x+240+\y*60/\k:\t) -- ++(60*\x+120+\y*60/\k:\t);}}
\end{tikzpicture}
}
\newcommand{\C}[3]	{
\begin{tikzpicture}[scale=#3]
\draw (0,0) circle (1);
\foreach \x in {#1}	{\draw (30*\x:1) arc (60+30*\x:120+30*\x:1);}
\foreach \x in {#2}	{\draw (30*\x-30:1) arc (30*\x:60+30*\x:1);}
\end{tikzpicture}
}
\newcommand{\N}{\mathbb{N}}
\newcommand{\n}{\mathcal{N}}
\newenvironment{definition}{\medskip\noindent\textbf{Definition:} }{\medskip}
\newtheorem{theorem}{Theorem}[section]
\newtheorem{lemma}[theorem]{Lemma}
\newtheorem{prop}[theorem]{Proposition}
\newtheorem{cor}[theorem]{Corollary}
\newtheorem{conj}[theorem]{Conjecture}
\title{Infinite families of monohedral disk tilings}
\author{Joel Anthony Haddley, Stephen Worsley\\University of Liverpool}
\date{}
\begin{document}

\maketitle

\section{Introduction}

A tiling of a planar shape is called {\em monohedral} if all tiles are congruent to each other. Our study will be monohedral tilings of the disk. Such tilings are produced on a daily basis by pizza chefs taking radial cuts distributed evenly around the centre of the pizza (Figure~\ref{fig:symradgen}). We will call such tilings {\em symmetric radially generated tilings}, such

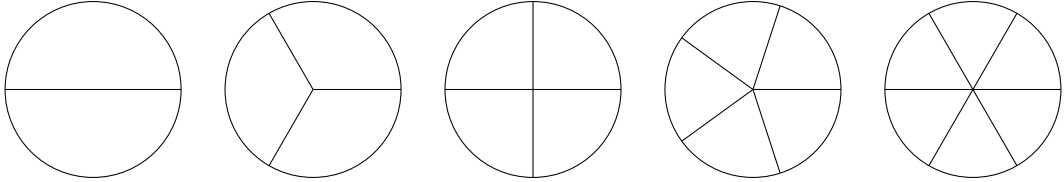
\begin{figure}[htbp]\centering
\begin{tikzpicture}[scale=1.3]
\def\r{0.9}
\foreach \n in {2,...,6}	{
\begin{scope}[xshift=64*\n-4]
\draw (0,0) circle (\r);
\foreach \x in {1,...,\n}	{
\draw (0,0) -- (360*\x/\n:\r);}
\end{scope}}
\end{tikzpicture}
\caption{Symmetric Radially Generated Tilings}\label{fig:symradgen}
\end{figure}

\noindent After constructing this tiling, a neighbourhood of the origin has non\--trivial intersection with each tile. The main problem on which this article is based is:

\begin{quote}
\em Can we construct monohedral tilings of the disk such that a neighbourhood of the origin has trivial intersection with at least one tile?
\end{quote}

\noindent This problem was posed in~\cite{CroftFalconerGuy}, and is in a similar vein to~\cite{Goldberg}. Informally, it may be stated in terms of slicing pizzas: can we slice a pizza into congruent pieces such that at least one piece does not touch the centre? The answer to this problem is yes, and Figure~\ref{fig:intro} displays some solutions.

\begin{figure}[htbp]\centering
\begin{tikzpicture}[scale=1.7]
\draw (0,0) circle (1);
\foreach\x in {0,...,5}	{
\draw[name path=curve] (60*\x:1) arc(60*(\x+1):60*(\x+2):1);
\path[name path=line] (60*\x+60:1) -- (60*\x:0.5);
\draw [name intersections={of=curve and line}] (60*\x+60:1) -- (intersection-1);
}
\end{tikzpicture}
\qquad
\begin{tikzpicture}[scale=1.7]
\draw (0,0) circle (1);
\foreach\x in {0,...,5}	{
\draw[name path=curve] (60*\x:1)+(60*\x+120:0.422) arc(60*(\x+1):60*(\x+2):1) -- (0,0);
\path[name path=line] (0,0) -- (60*\x+60:1);
\draw[name intersections={of=curve and line}] (60*\x+60:1) -- (intersection-1);
\draw[rotate around={60:(intersection-1)}, name intersections={of=curve and line}] (60*\x+60:1) -- (intersection-1);}
\end{tikzpicture}
\qquad
\begin{tikzpicture}[scale=1.7]
\draw (0,0) circle (1);
\foreach\x in {0,...,5}	{
\draw (60*\x:1) arc(60*(\x+1):60*(\x+2):1);
\draw (60*\x:1) arc(60*(\x+1)-30:60*(\x+2)-30:1);
}
\end{tikzpicture}
\qquad
\begin{tikzpicture}
\def\s{0.64}
\def\k{2};
\def\prad{2.65*\s};
\def\qrad{0.66*\s};
\node (p) at (-\s,0) {};
\node (q) at (\s,0) {};
\path [name path=arcp] (p) circle (\prad);
\path [name path=arcq] (q) circle (\qrad);
\path [name intersections={of=arcp and arcq, name=i, total=\t}];
\node (r) at (i-1) {};
\pgfresetboundingbox
\node [draw] at (p) [circle through=(r)] {};
\pgfextractangle{\aq}{q}{r};
\pgfextractangle{\ap}{p}{r};
\foreach \x in {0,...,5}	{
\draw (-\s,0) ++(60*\x:2*\s) ++(60*\x+\aq:\qrad) arc(60*\x+\aq:60*\x+\aq+180:\qrad/2) arc(60*\x+\aq+240:60*\x+\aq+60:\qrad/2) arc (60*\x+60+\ap:60*\x+120+\ap:\prad) arc (60*\x+\aq+120:180+60*\x+\aq+120:\qrad/2);
\draw (-\s,0) ++(60*\x:2*\s) arc(60*\x+\aq+180+30:60*\x+\aq+30:\qrad/2) arc (60*\x+60+\ap-30:60*\x+120+\ap-30:\prad) arc (60*\x+\aq+120-30:180+60*\x+\aq+120-30:\qrad/2) arc (180+60*\x+\aq+120-30+60:60*\x+\aq+120-30+60:\qrad/2);}
\end{tikzpicture}
\caption{Tilings $D_3^0$, $D_3^1$, $C_{3,2}^0$, $\widetilde{C}_{3,2}^{t*}$}\label{fig:intro}
\end{figure}
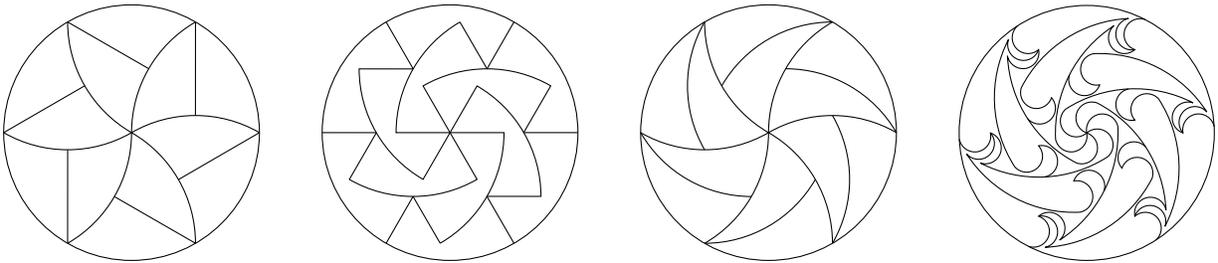

\noindent The first tiling in Figure~\ref{fig:intro}, $D_3^0$ in our notation, appears as the logo of the MASS program at Penn State~\cite{MASS}. This notation will be explained later, but note that the tilings will be presented as $C_{n,k}^t$, $\widetilde{C}_{n,k}^{t*}$ and $D_n^t$. There will be some restrictions on the indices, but $n$ and $k$ are chosen are each chosen from infinite sets of integers indicating there are infinitely many solutions; $t$ is chosen from a real interval and $t*$ is a path meaning there are uncountably many solutions. In fact, this path is required only to be continuous, simple and contained with some bounds, so even some fractal paths (i.e. the Koch snowflake) would be permissible.

We say that the tiles in Figure~\ref{fig:symradgen} are examples of tiles that are {\em radially generated} about a vertex as they consist of three components: two straight line segments, and an arc centred at the common endpoint of the straight line segments. If we fix an angle, we may think of one of these straight line segments being `dragged' by this angle about its endpoint to the other straight line segment, the locus of its other endpoint tracing out the arc component. The main idea on which this paper is based is to construct tiles that are radially generated by more than one vertex at the same time. We will show that a tile may be radially generated by at most two vertices, will describe a method for classifying such tiles, and will present ways of splitting such tiles into congruent sub\--tiles. Indeed, although no tiling in Figure~\ref{fig:intro} consists of radially generated tiles, in each case a union of tiles is a radially generated tile.

While the initial problem is interesting, it has also been answered: yes, it is possible. There are many similar questions one can ask. Does a monohedral tiling of the disk exist such that\ldots
\begin{enumerate}
\item at least one piece does not intersect the centre?
\item at least one piece does not intersect the bounding circle?
\item at least one piece does not intersect the centre and the tiling has trivial cyclic symmetric?
\item the tiling is not a symmetric radially generated tiling, and the tiling has an odd number of tiles?
\item the tiling is not a symmetric radially generated tiling, and the tiling has a line of symmetry?
\item the centre appears as the edge of a tile?
\item the centre appears in the interior of a tile?
\end{enumerate}
\noindent And the final question:
\begin{enumerate}
\item[8.] Have we provided a complete classification of monohedral disk tilings in this paper?
\end{enumerate}
It is our goal to work towards a classification of all monohedral tilings of the disk. We present a classification of such tilings obtained from the novel construction involving tiles radially generated from more than one point. After completing our classification, we will return to this list of questions, answering those we can and stating conjectures about the rest. In Section~\ref{sect:enumc} we will note the connection between one of families of tilings and the necklace numbers, and will show some combinatorial properties of our tilings.

The well\--known tiling $D_3^0$ is similar to part of a non\--periodic polygonal tiling of the plane found on page~236 of~\cite{piano}, and tiling of $D_7^0$ (a generalisation of $D_3^0$: see Section~\ref{sect:dnt}) is similar to another planar tiling found on page~515 of~\cite{shepgrun}. These images have been adapted in Figures~\ref{nonperiodic3} and~\ref{nonperiodic7}.

\begin{figure}[htbp]\centering
\begin{tikzpicture}[scale=1.8]
\draw (0:1) -- (30:1) -- (60:1) -- (90:1) -- (120:1) -- (150:1) -- (180:1) -- (210:1) -- (240:1) -- (270:1) -- (300:1) -- (330:1) -- cycle;
\foreach \x in {0,...,5}	{
\draw (0:0) -- (15+60*\x:0.52) -- (60*\x:1);}
\foreach \x in {0,...,5} {
\draw (15+60*\x:0.52) -- (60+60*\x:1);}
\end{tikzpicture}
\caption{Part of a non\--perioic planar tiling by an irregular quadrilateral}\label{nonperiodic3}
\end{figure}
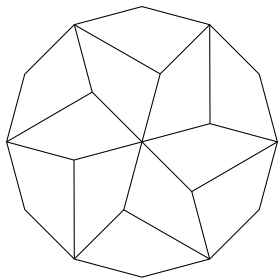

\begin{figure}[htbp]\centering
\includegraphics[height=5cm]{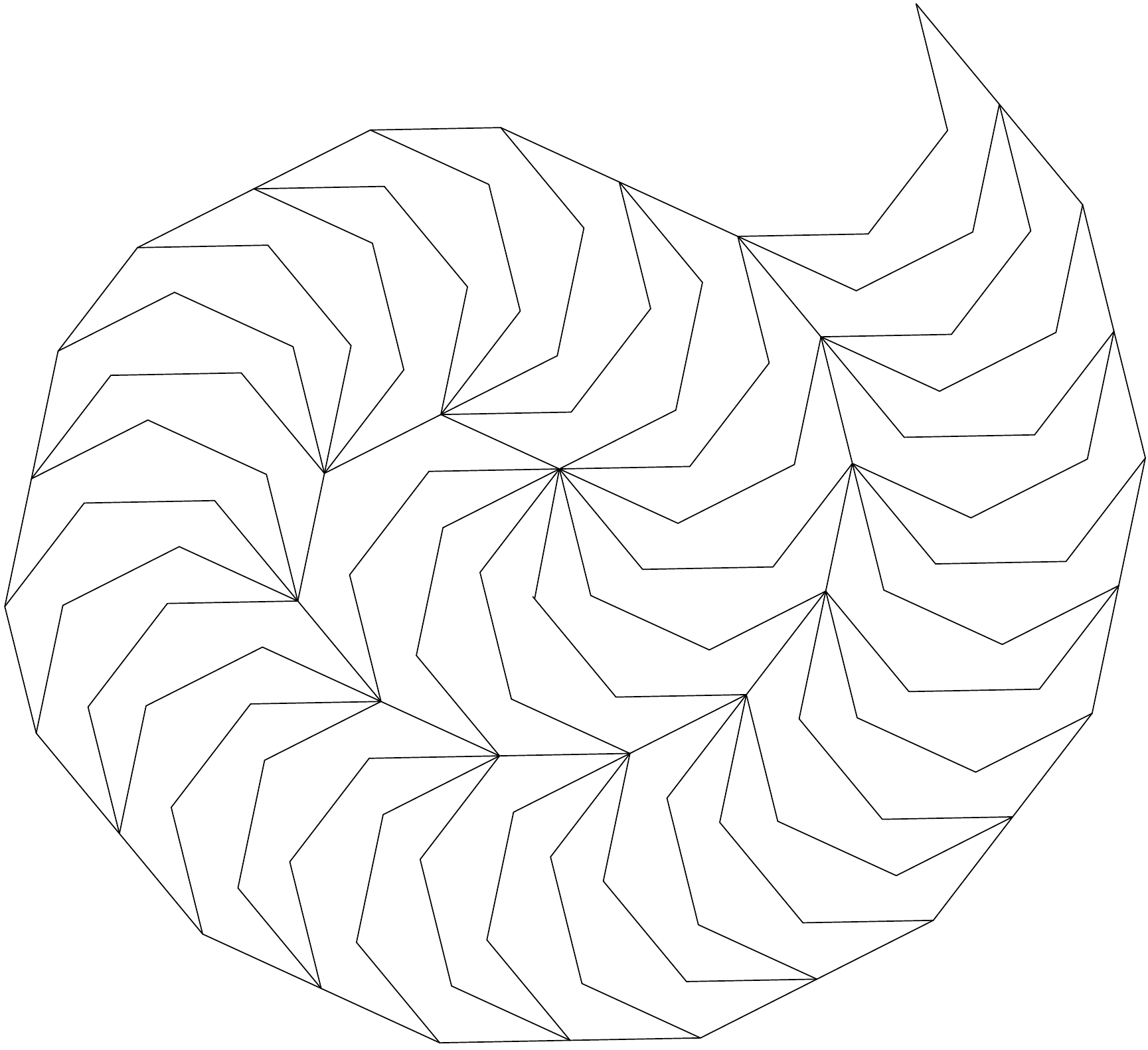}
\caption{Part of a non\--perioic planar tiling by an irregular quadrilateral}\label{nonperiodic7}
\end{figure}

\noindent We would like to acknowledge Colin Wright's observation that the tilings $D_n^0$ were generalisations of $D_3^0$, and thank him and the late Ian Porteous for many enthusiastic discussions about this problem. Thanks also to Karene Chu for the initial introduction to the problem. We also acknowledge the overwhelming and somewhat surprising media response to this research, and would like to thank all such outlets: particularly those who explained the mathematics.

\section{Monohedral Tilings}

We begin by making precise what we mean by a monohedral tiling.

\begin{definition}
Let $\gamma$ be a simple plane contour bounding the open, contractible region $U$. The closure $\overline{U}=U\cup\gamma$ is called a {\em tile}. The region $U$ is known as the {\em interior} of the tile; the region $\overline{U}^C$ the {\em exterior}; the curve $\gamma$ the {\em boundary} of the tile.
\end{definition}

\noindent We remark that according to this definition, for all points $g\in\gamma$ an arbitrarily small disk centred at $g$ has non\--trivial intersection with both the interior and exterior of the tile. That is to say, the boundary cannot simply continue to the interior to `decorate' the tile to trivially give more solutions.

A tile has a fixed position in the plane. Let $\overline{U}_1,\,\overline{U}_2$ be tiles bounded by $\gamma_1,\,\gamma_2$ respectively. Then $(\overline{U}_1\cap \overline{U}_2)\subset(\gamma_1\cap\gamma_2)$ since otherwise $U_1$ and $U_2$ would not be contractible. Hence two tiles may only intersect along their boundaries. Since tiles have fixed position, two distinct tiles may never be equal.

\begin{definition}
The tile $\overline{U}_1$ is said to be {\em congruent} to $\overline{U}_2$ if $\overline{U}_1$ is isometric to $\overline{U}_2$, or if $\overline{U}_1$ is isometric to the mirror image of $\overline{U}_2$ (i.e. the image of $\overline{U}_2$ under any orientation reversing isometry of the plane).
\end{definition}

\noindent Congruence is an equivalence relation so we may refer to all tiles in a set as being congruent to each other. A {\em copy} of a tile $\overline{U}_1$ is any congruent tile $\overline{U}_2$ with a different position in the plane.

\begin{definition}
Let $\overline{V}$ be a closed, bounded subset of the plane. If there exists a set of tiles $\mathcal{U}$ such that
\[\overline{V}=\bigcup_{\overline{U}\in\,\mathcal{U}}\overline{U}\]
then $\mathcal{U}$ is called a {\em tiling} of $\overline{V}$. If all tiles in $\mathcal{U}$ are congruent to each other, this tiling is called {\em monohedral}.
\end{definition}

\begin{definition}
We will say that a tiling of $\overline{V}_1$ is the {\em same} as a tiling of $\overline{V}_2$ if $\overline{V}_1$ and $\overline{V}_2$ are similar, and if one tiling may be mapped to the other by any orientation preserving affine transformation. Otherwise, we will say they are {\em different}.
\end{definition}

\begin{definition}
Suppose $\overline{V}$ admits different tilings by $\mathcal{U}$ and $\mathcal{U}'$, where the tiles in $\mathcal{U}'$ differ from the tiles in $\mathcal{U}$ only by their position in the plane, then $\mathcal{U}'$ will be called a {\em retiling} of $\mathcal{U}$.
\end{definition}

\begin{definition}
If for a monohedral tiling of $\overline{V}$ by tiles $\overline{U}_i$, and if the tile $\overline{U}_i$ admits a monohedral tiling into congruent tiles $\overline{W}_{i,j}$, the tiling of $\overline{V}$ by $\overline{W}_{i,j}$ is called a {\em monohedral subtiling} of $\overline{V}$.
\end{definition}

\noindent In the following sections we will fix $\overline{V}$ to be a closed disk and will describe how monohedral tilings and monohedral subtilings may arise.

\section{Radially Generated Tilings}

We will introduce the concept of radially generated tiles. If $\alpha$ is an angle, $p$ is a point plane and $P$ is any subset of the plane, then the operator $\alpha_p(P)$ will denote the anticlockwise rotation of $P$ by $\alpha$ about $p$.

\begin{definition}
Let $\eta$ be a continuous planar curve segment with no self\--intersections, let $p$ be a point on $\eta$, and let $\alpha\in(0,2\pi)$ be an angle. The union of $\eta,\,\eta'=\alpha_p(\eta)$ and arcs centred at $p$ connecting the end\--points of $\eta$ and $\eta'$ is a contour. We denote this contour by $\gamma=[\eta,p,\alpha]$, and say that $\gamma$ is {\em radially generated by $\eta$, $p$ and $\alpha$}.
\end{definition}

\begin{definition}
If $\gamma=[\eta,p,\alpha]$ is the boundary of a tile $\overline{U}$ if and only if $\gamma$ is simple. If $\overline{U}$ is a tile, we say that $\overline{U}$ is {\em radially generated by $\eta$, $p$ and $\alpha$}, denoted $\overline{U}=<\eta,p,\alpha>.$
\end{definition}

\begin{lemma}
If $\overline{U}=<\eta,p,\alpha>$ then $p$ is an end-point of $\eta$.
\end{lemma}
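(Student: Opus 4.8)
The plan is to prove the statement by contradiction, and the whole argument turns on the single elementary fact that a rotation about $p$ fixes $p$. Suppose $\overline{U}=\langle\eta,p,\alpha\rangle$ is a genuine tile yet $p$ is \emph{not} an endpoint of $\eta$. Since $\eta$ is a simple curve segment it has exactly two endpoints, so $p$ lying on $\eta$ without being one of them means that $p$ is an interior point of $\eta$.

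First I would transport this fact to the rotated copy. As $\alpha_p$ denotes rotation about $p$, it fixes $p$, so $p=\alpha_p(p)\in\alpha_p(\eta)=\eta'$. Because $\alpha_p$ is a homeomorphism of the plane carrying the arc $\eta$ onto the arc $\eta'$ and the endpoints of $\eta$ onto the endpoints of $\eta'$, it maps interior points of $\eta$ to interior points of $\eta'$; hence $p$ is an interior point of $\eta'$ as well. Thus $p$ is an interior point common to both arcs $\eta$ and $\eta'$, each of which is a sub-arc of the contour $\gamma=[\eta,p,\alpha]$.

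Finally I would read off the failure of simplicity. By definition $\gamma$ is traversed as $\eta$, then an arc joining an endpoint of $\eta$ to the corresponding endpoint of $\eta'$, then $\eta'$, then a second connecting arc back to the start. Each connecting arc has positive length: it joins an endpoint $q$ of $\eta$ (so $q\neq p$, as $p$ is interior) to $\alpha_p(q)$ along radius $|pq|>0$ through angle $\alpha\in(0,2\pi)$, and $\alpha_p(q)=q$ would force $q=p$. Consequently the $\eta$-portion and the $\eta'$-portion of $\gamma$ occupy disjoint parameter subintervals, and $\gamma$ passes through the single point $p$ once in each. Hence $\gamma$ is not injective, so it is not simple, contradicting the requirement that $\gamma$ be simple for $\overline{U}$ to be a tile; therefore $p$ must be an endpoint of $\eta$. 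The one place demanding care — and essentially the only obstacle — is precisely this last parametrisation step: one must check that the two visits to $p$ occur at genuinely distinct parameters rather than being an artefact of a degenerate zero-length connecting arc, which is exactly what the positivity of $|pq|$ and of $\alpha$ guarantees.
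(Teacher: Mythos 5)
Your proof is correct and is essentially the paper's own argument: the paper also argues by contradiction, observing that if $p$ is not an endpoint of $\eta$ then $\gamma=[\eta,p,\alpha]$ has a self\--intersection at $p$ (since $p=\alpha_p(p)$ lies in the interior of both $\eta$ and $\eta'$) and hence is not simple. You have merely filled in the details of why the two visits to $p$ occur at distinct parameters, which the paper leaves implicit.
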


\begin{proof}
Suppose otherwise. Then $\gamma=[\eta,p,\alpha]$ has a self\--intersection at $p$ and is not simple.
\end{proof}

\noindent This implies that when we add circular arcs centred at $p$ to $\eta$ and $\eta'$, we add exactly one arc. We denote this arc by $\rho_p$ so that\[\gamma=[\eta,p,\alpha]=\eta\cup\eta'\cup\rho_p.\]

\begin{lemma}\label{lem:rgtile}
Let $\overline{U}=<\eta,p,2\pi/n>$ be a tile for all $n\in\Sigma\subseteq\N$. Suppose $\Sigma\neq\varnothing$ and $\min(\Sigma)=2$. Then $n$ copies of $\overline{U}$ monohedrally tile the disk.
\end{lemma}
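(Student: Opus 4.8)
The plan is to exhibit the $n$ copies explicitly as the rotational orbit of $\overline{U}$ and to show they form a fundamental-domain tiling of a round disk. Write $R=(2\pi/n)_p$ for the rotation by the generating angle about $p$, so that $\langle R\rangle$ is cyclic of order $n$. By the previous lemma $p$ is an end-point of $\eta$; let $e$ be the other end-point and set $r=|e-p|$, so the arc $\rho_p$ is a circular arc of radius $r$ centred at $p$ subtending the angle $2\pi/n$. Let $D$ be the closed disk of radius $r$ about $p$. The $n$ candidate tiles are $\overline{U}_k:=R^k(\overline{U})$ for $k=0,\dots,n-1$; each is congruent to $\overline{U}$ since $R$ is an isometry, so it suffices to prove that $\{\overline{U}_k\}$ is a tiling of $D$, which is then automatically monohedral with $n$ tiles.

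First I would record the boundary combinatorics. Since $\eta'=R(\eta)$, consecutive tiles $\overline{U}_k$ and $\overline{U}_{k+1}$ share the whole edge $R^{k+1}(\eta)$, and the outer edges $R^k(\rho_p)$ are arcs of $\partial D$ of angular width $2\pi/n$ based at the angles $\arg(e)+2\pi k/n$; hence the $n$ outer arcs tile the bounding circle $\partial D$ exactly once, and in particular the outer boundary of $\bigcup_k\overline{U}_k$ is $\partial D$. I would then reduce the whole statement to one clean assertion: that the $n$ \emph{spokes} $\eta_k:=R^k(\eta)$ are pairwise disjoint apart from their common point $p$. Granting this, each $\eta_k$ is a simple arc from the centre $p$ to the boundary point $R^k(e)$, the boundary points occur in the same cyclic order as the spokes, and a short Euler-characteristic count (the $1$-skeleton consisting of the $n$ spokes and the $n$ boundary arcs has $V=n+1$, $E=2n$, hence $n$ bounded faces) shows that these arcs cut $D$ into exactly $n$ regions, the region between $\eta_k$ and $\eta_{k+1}$ being bounded precisely by the Jordan curve $\eta_k\cup R^k(\rho_p)\cup\eta_{k+1}$. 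Since a Jordan curve bounds a unique interior, that region is exactly the interior of $\overline{U}_k$; the regions are disjoint and cover $D$, giving the tiling.

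The main obstacle is therefore the pairwise disjointness of the spokes, and this is exactly where the hypothesis $\min(\Sigma)=2$ enters. Writing $s=|x-p|$ for the distance to $p$, the spokes $\eta$ and $R^m(\eta)$ meet away from $p$ precisely when some circle centred at $p$ is met by $\eta$ in two points whose angular coordinates differ by $2\pi m/n$. Thus the clean sufficient condition is that $\eta$ meets every circle centred at $p$ in at most one point (radial monotonicity), for then $\eta\cap\phi_p(\eta)=\{p\}$ for \emph{every} $\phi\in(0,2\pi)$ and all the spokes are automatically disjoint. My plan is to establish this monotonicity from the fact that $2\in\Sigma$: the $n=2$ member $\langle\eta,p,\pi\rangle$ is the extreme ``widest'' tile, its outer edge $\rho_p$ is a full semicircle of radius $r$, and I would use the simplicity of this contour, together with the fact that the semicircular outer edge forces $\eta\subseteq\overline{D}$, to rule out a circle meeting $\eta$ twice. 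I expect this monotonicity step to be the genuinely delicate part: the bare condition $\eta\cap\pi_p(\eta)=\{p\}$ only forbids angular separation exactly $\pi$, so the argument must leverage the arc and end-point conditions and the contractibility of the tile to upgrade this to full radial monotonicity, after which the reduction above closes the proof uniformly for every $n\in\Sigma$.
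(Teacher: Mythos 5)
Your overall architecture (rotational orbit, outer arcs covering the bounding circle exactly once, reduction to the behaviour of the spokes) is more detailed than the paper's own proof, which merely describes the construction and asserts it is obvious once $\eta\cap\alpha_p(\eta)=\{p\}$. But the step you flag as delicate is not merely delicate: it is false. Radial monotonicity of $\eta$ does not follow from $2\in\Sigma$. Take $p=0$, $e=1$ and let $\eta$ be the polygonal arc $0\to\tfrac12e^{i\pi/4}\to\tfrac12e^{-i\pi/4}\to1$. This arc is simple, lies in the closed unit disk and meets the unit circle only at $1$, and lies in the closed right half-plane meeting the imaginary axis only at $0$; hence $[\eta,0,\pi]$ is a simple contour, $\langle\eta,0,\pi\rangle$ is a genuine tile, and $2\in\Sigma$. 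Yet $\eta$ meets every circle of radius $s\in(\tfrac{\sqrt2}{4},\tfrac12)$ in three points, so it is not radially monotone. (The same example shows the hypotheses can never yield monotonicity: $\tfrac12e^{\pm i\pi/4}$ differ by a rotation of $\pi/2$ about $0$, so $4\notin\Sigma$; membership of $n$ in $\Sigma$ only ever constrains the angular separations $2\pi k/n$, never all separations.) Consequently your reduction to pairwise disjoint spokes is unsupported, and even the weaker claim you actually need --- that for a fixed $n\in\Sigma$ the $n$ spokes $\alpha_p^k(\eta)$ meet pairwise only at $p$ --- does not follow from the cases $k=1$ and $k=n/2$ alone and would itself require an argument.

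The hypothesis $\min(\Sigma)=2$ is doing a different job. Simplicity of $[\eta,p,\pi]$ forces $\eta\cap\partial D=\{e\}$, because the two semicircular arcs $\rho_p$ and $\pi_p(\rho_p)$ together cover all of $\partial D$ and $\eta$ must avoid each of them except at $e$; hence $\eta\subseteq D$ and every rotate of $\overline{U}$ lies in $D$. For $n\ge3$ alone this fails: $\rho_p$ covers only $1/n$ of the circle and $\eta$ could leave $D$ through the uncovered part, in which case the $n$ rotates do not tile the disk at all. Once containment is known, the disjointness-and-covering question is better settled without any statement about spokes: for $x$ off the union of the $n$ rotated contours, the number of indices $k$ with $x\in\alpha_p^k(U)$ equals the sum of the winding numbers of the contours $\alpha_p^k(\gamma)$ about $x$, and in that sum each rotated copy of $\eta$ is traversed once in each direction, so everything cancels except $\sum_k\alpha_p^k(\rho_p)=\partial D$, giving the value $1$ throughout the interior of $D$. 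The rotates therefore have pairwise disjoint interiors and cover $D$, which is the required tiling (tiles are allowed to meet along boundary beyond the common spokes). I would restructure the proof around these two points rather than around radial monotonicity.
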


\begin{proof}
This is equivalent to the following obvious construction. Denote $\alpha=2\pi/n$. Take a disk centred at $p$ and connect $p$ to a point on the boundary of the disk with a path $\eta$ that has no self\--intersections. Then provided that $\eta\cap(\alpha_p(\eta))$ consists only of $p$, one can repeatedly take images $\alpha_p^i(\eta)$. The induced tiling is a monohedral tiling of the disk centred at $p$.
\end{proof}

\begin{definition}
A tiling is said to be {\em radially generated} if it consists of radially generated tiles.
\end{definition}

\noindent A radially generating tiling of the disk is shown in Figure~\ref{fig:simpleradgen}

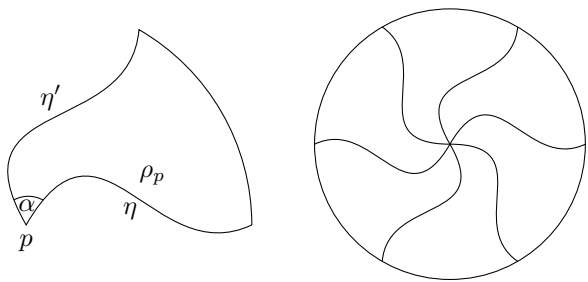
\begin{figure}[htbp]\centering
\begin{tikzpicture}
\node (b) at (-20:1.8) {};
\node (br) at (40:1.8) {};
\node (a) at (60:2) {};
\node (ar) at (120:2) {};
\draw (0,0) node[below] {$p$}
.. controls(a) and (b) .. (3, 0) node[midway,below] {$\eta$}
arc (0:60:3) node[midway,above right] {$\rho_p$}
 .. controls(br) and (ar) .. (0,0) node[midway,above] {$\eta'$}
 -- cycle;
\draw (55:0.4) arc (60:119:0.4);
\node at (0.01,0.26) {$\alpha$};
\end{tikzpicture}
\qquad
\begin{tikzpicture}[scale=0.6]
\node (a1) at (60:2) {};
\node (a2) at (120:2) {};
\node (a3) at (180:2) {};
\node (a4) at (240:2) {};
\node (a5) at (300:2) {};
\node (a6) at (0:2) {};
\node (b1) at (-20:1.8) {};
\node (b2) at (40:1.8) {};
\node (b3) at (100:1.8) {};
\node (b4) at (160:1.8) {};
\node (b5) at (220:1.8) {};
\node (b6) at (280:1.8) {};
\draw (0,0) .. controls(a1) and (b1) .. (0:3);
\draw (0,0) .. controls(a2) and (b2) .. (60:3);
\draw (0,0) .. controls(a3) and (b3) .. (120:3);
\draw (0,0) .. controls(a4) and (b4) .. (180:3);
\draw (0,0) .. controls(a5) and (b5) .. (240:3);
\draw (0,0) .. controls(a6) and (b6) .. (300:3);
\draw (0,0) circle (3);
\end{tikzpicture}
\caption{A radially generated tiling of the disk}\label{fig:simpleradgen}
\end{figure}

The trivial tiling of the disk (the disk itself) may be thought of as a degenerate radially generated tiling of the disk by the tile $\overline{U}=<p',p,2\pi>$, where $p'$ is a point distinct from $p$.

\noindent Note that if a tile $<\eta,p,\alpha>$ radially generated by a single point $p$ has a line of symmetry, then $\eta$ is necessarily a straight line segment. Moreoever if $\alpha=2\pi/n$ in this case, then $n$ copies gives the standard tiling of the disk into radial slices. We call these tilings {\em symmetric radially generated} (see Figure~\ref{fig:symradgen}).

\begin{definition}A tile will be called {\em radially generated by multiple points} if it is radially generated by at least $2$ of its vertices. I.e.,\[<\eta_1,p_1,\alpha_1>=<\eta_2,p_2,\alpha_2>=\cdots.\] We will call a tile generated by exactly 2 points a {\em wedge}; or, if it has a line of symmetry, a {\em symmetric wedge}.\end{definition}

\noindent If the rotation angles at each vertex are of the form $2\pi/n$ for some integer $n$ then the wedge may tile the disk about either vertex as in Figure~\ref{fig:wedgex2}.

\begin{figure}[htbp]\centering
\begin{tikzpicture}
\def\s{0.5}
\coordinate (p) at (-\s,0);
\coordinate (q) at (\s,0);
\def\prad{3.34*\s};
\def\qrad{1.42*\s};
\path [name path=arcp] (p) circle (\prad);
\path [name path=arcq] (q) circle (\qrad);
\path [name intersections={of=arcp and arcq, name=i, total=\t}];
\coordinate (r) at (i-2);
\pgfextractangle{\ap}{p}{r};
\pgfextractangle{\aq}{q}{r};
\pgfresetboundingbox;
\draw (p) circle (\prad);
\foreach \x in {1,...,6}	{
\draw[rotate around={60*\x:(p)}] ([rotate around={60*\x:(p)}]r) -- ([rotate around={60*\x:(p)}]q) -- ++(60+\aq:\qrad) arc(60+\ap:120+\ap:\prad) -- (p);}
\end{tikzpicture}
\quad
\begin{tikzpicture}
\def\s{0.5}
\coordinate (p) at (-\s,0);
\coordinate (q) at (\s,0);
\def\prad{3.34*\s};
\def\qrad{1.42*\s};
\path [name path=arcp] (p) circle (\prad);
\path [name path=arcq] (q) circle (\qrad);
\path [name intersections={of=arcp and arcq, name=i, total=\t}];
\coordinate (r) at (i-2);
\pgfextractangle{\ap}{p}{r};
\pgfextractangle{\aq}{q}{r};
\pgfresetboundingbox;
\draw (q) circle (\prad);
\foreach \x in {1,...,6}	{
\draw[rotate around={60*\x:(q)}] (q) -- ++(60+\aq:\qrad) arc(60+\ap:120+\ap:\prad) -- ([rotate around={60*\x:(q)}]p) -- ++(180+\aq:\qrad);}
\end{tikzpicture}
\caption{A wedge tiling the disk in two ways}\label{fig:wedgex2}
\end{figure}
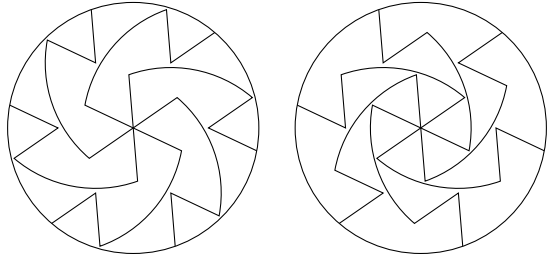

\begin{theorem}\label{thm:equalangles}
If a wedge is given by $<\eta,p,\alpha>=<\nu,q,\beta>$, and this wedge tiles the disk around either $p$ or $q$, then $\alpha=\beta=\pi/n,$ where $n\ge3$ is an odd integer.
\end{theorem}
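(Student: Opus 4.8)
The plan is to play the two radial descriptions of the single boundary curve $\gamma$ against each other. Write $\gamma=\eta\cup\eta'\cup\rho_p$ for the generation about $p$ and $\gamma=\nu\cup\nu'\cup\rho_q$ for the generation about $q$, where $\eta'=\alpha_p(\eta)$, $\nu'=\beta_q(\nu)$, and $\rho_p,\rho_q$ are the two circular arcs centred at $p$ and $q$. By the lemma that $p$ is an end-point of $\eta$, both $p$ and $q$ are corners of $\gamma$, with interior angles $\alpha$ and $\beta$. The first elementary observation I would record is that $\rho_p$ and $\rho_q$ can meet in at most a point: a circular arc determines its centre, so if they overlapped in a genuine arc we would get $p=q$. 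Hence the two arcs are distinct features of $\gamma$, and the task splits into showing first $\alpha=\beta$ and then that the common value is $\pi/n$ with $n\ge3$ odd.

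To get $\alpha=\beta$ I would bring in the tiling hypothesis, which by symmetry of the two generating points I may take to be a tiling about $p$. By Lemma~\ref{lem:rgtile} this tiling consists of $m$ congruent copies obtained by rotating through $\alpha$ about $p$, so $\alpha=2\pi/m$; moreover each copy is radially generated about the corresponding image of $q$, contributing an interior angle $\beta$ there. The idea is then to analyse the tiling in a neighbourhood of the interior point $q=q_0$: the incident tiles fill angle $2\pi$, one of them contributing its $q$-angle $\beta$, and by congruence together with the rigidity of the radial structure the incident angles are forced to coincide, which both fixes how many tiles meet at $q$ and yields $\beta=\alpha$. Concretely I expect $\alpha=\beta$ to surface as the statement that $\rho_q$ is a rotated copy of $\rho_p$, so the two arcs have equal radius and equal angular measure.

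It then remains, granting $\alpha=\beta=2\pi/m$, to show $m=2n$ with $n$ odd (and $n\ge3$). Here I would track the $m$ images $q_i=(i\alpha)_p(q)$, which form a regular $m$-gon interior to the disk, together with the second arc $\rho_q$ attached at each. Each such arc is matched, across the tiling, by an edge of a \emph{different} tile whose own distinguished second centre is some $q_j$, so the matching defines a fixed-point-free pairing of the $q_i$; the requirement that following this pairing once around the $m$-gon reassemble the pieces into a single simple contour—rather than into several interleaved loops or a self-crossing curve—should be exactly the condition that $n=m/2$ be odd, an even $n$ forcing either two separate boundaries or a crossing of $\rho_p$ with $\rho_q$. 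The degenerate value $n=1$ (angle $\pi$, so $p$ is not a genuine corner) and the value $n=2$ are then excluded, leaving $n\ge3$ odd. I expect this last parity/closing-up step to be the main obstacle: the angle-chasing that produces $\alpha=\beta$ and $\alpha=2\pi/m$ is routine, but proving that an even $n$ cannot assemble into one simple tile while an odd $n$ can is the delicate geometric heart of the statement.
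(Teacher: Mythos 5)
Your outline reaches the right statements but proves neither of them, and it misses the one idea the paper's argument is actually built on: composing the two generating rotations. The paper notes that $q$ is a vertex of the wedge, hence so is $\alpha_p(q)\in\eta'$, hence so is $\beta_q\circ\alpha_p(q)\in\eta$, and so on; the orbit of $q$ under the alternating rotations runs through the vertices of the wedge and terminates at $p$. Since all images $(\beta_q\circ\alpha_p)^j(q)$ lie on one circle centred at the fixed point of that composite (a rotation by $\alpha+\beta$), and both $q$ and $p$ occur among them, that fixed point lies on the perpendicular bisector of $pq$; the same holds for $\alpha_p\circ\beta_q$, and an isosceles\--triangle/symmetry argument then yields $\alpha=\beta$. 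Your substitute for this step --- that at the interior point $q$ of the tiling about $p$ ``the incident angles are forced to coincide'' by congruence and rigidity --- is an assertion, not an argument: congruent tiles can meet at a point while presenting different vertices (hence different interior angles) there, and to determine which vertex of which copy sits at $q$ and with what angle you would need precisely the vertex\--chain analysis above. So the ``routine angle\--chasing'' you defer to is the part that carries the theorem.

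The oddness of $n$ is likewise not established. The paper reads it off the same vertex chain: the images of $q$ on the wedge split equally between $\eta$ and $\eta'$ and overlap only in $p$, so their number is odd, while the full orbit of $q$ under $\beta_q\circ\alpha_p$ (a rotation by $2\alpha$ of order $n$) is a regular $n$\--gon whose configuration is symmetric about the line $pq$; together these force $n\ge3$ odd. Your proposed mechanism --- a fixed\--point\--free pairing of the images $q_i$ whose closing\--up into a single simple contour is equivalent to $m/2$ being odd --- is a genuinely different and not implausible route, but you explicitly leave its verification open (``should be exactly the condition''), and that verification is the entire content of the claim; you would also need to rule out $n=2$ separately. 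As written, the proposal establishes neither $\alpha=\beta$ nor $\alpha=\pi/n$ with $n$ odd.
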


\begin{proof}
Let $\rho_p$ and $\rho_q$ denote the arc components of the boundary centred at $p$ and $q$ respectively. Since $q$ is a vertex of the boundary of the wedge, its radially generated image $\alpha_p(q)\in\eta'=\alpha_p(\eta)$ is also a vertex of the boundary of the wedge. Hence its image $\beta_q\circ\alpha_p(q)\in\eta$ is another vertex of the boundary wedge (since the wedge is radially generated about $q$). Repeatedly taking the image of $q$ under alternating compositions of rotations $\alpha_p$ and $\beta_q$, the image alternates from belonging to $\eta$ or $\eta'$, and this process necessarily terminates when the image is $p\in\eta,\eta'$. Although we could in principle keep taking images after this, only those images obtained up to and including $p$ are vertices of the wedge. Notice that all the images of the composite function $(\beta_q\circ\alpha_p)^j(q)\in\eta$, and that $\beta_q\circ\alpha_p$ is a rotation by $\alpha+\beta$. Since $p$ is one of the images of $(\beta_q\circ\alpha_p)^j(q)$, and the images are evenly spaced around the fixed point of the map $\beta_q\circ\alpha_p$, we can conclude that the fixed point of $\beta_q\circ\alpha_p$ is equidistant from $p$ and $q$. Notice that since $q$ is fixed by $\beta_q$, we may apply the same argument to the images $(\alpha_p\circ \beta_q)^j(q)\in\eta'$ so that the fixed point of $\alpha_p\circ\beta_q$ is also equidistant from $p$ and $q$.

Suppose that the plane has a complex structure and, without loss of generality, fix $p=-1$ and $q=1$ so that we may describe our rotations as the complex rotations:
\begin{align*}
	\alpha_p	&:	z\mapsto -1+e^{\alpha i}(z+1)\\
	\beta_q	    &:	z\mapsto 1+e^{\beta i}(z-1).
\end{align*}
Let $c_+$ denote the fixed point of $\beta_q\circ\alpha_p$, and $c_-$ denote the fixed point of $\alpha_p\circ\beta_q$. The preceding argument tells us that $\Re(c_+)=\Re(c_-)=0$. Using this, again with the fact that $\alpha_p\circ\beta_q(q)=\alpha_p(q)$ (Figure~\ref{a_equals_b}) we see that the triangle $pqc_+$ is isosceles, and the figure presented is symmetric about the line through $p$ and $c_+$. Hence $\alpha=\beta$.
\begin{figure}[htbp]\centering
\begin{tikzpicture}[scale=2]
\node[circle, draw, inner sep = 0,minimum size=4pt, fill=black, label=below:$p$] (p) at (-1,0) {};
\node[circle, draw, inner sep = 0,minimum size=4pt, fill=black, label=below:$q$] (q) at (1,0) {};
\node[circle, draw, inner sep = 0,minimum size=4pt, fill=black, label=below:$c_+$] (c) at (0,0.32) {};
\node[circle, draw, inner sep = 0,minimum size=4pt, fill=black, label={right:$\alpha_q\circ\alpha_p(q)=\alpha_p(q)$}] (aq) at (0.62,1.18) {};
\draw (p) -- (q);
\draw (p) -- (aq);
\draw (c) -- (q);
\draw (c) -- (aq);
\draw (q) arc (0:36:2);
\draw[draw=black!50] (p) -- (c);
\draw[draw=black!50] (c) -- (0.9,0.62);
\node at (-0.8,0.07) {$\alpha$};
\node at (0.3,0.37) {$\alpha+\beta$};
\end{tikzpicture}
\caption{Proof that $\alpha=\beta$.}\label{a_equals_b}
\end{figure}
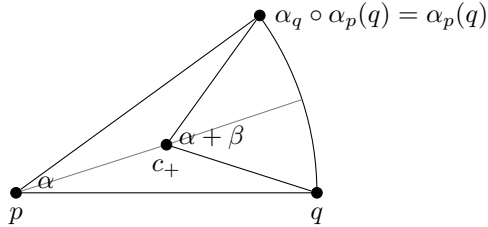

\noindent So the map $\alpha_p\circ \alpha_q$ (hence also $\alpha_q\circ \alpha_p$) is a rotation by $2\alpha$ and therefore has order $n=2\pi/2\alpha$, hence $\alpha=\pi/n$ for some integer $n\ge2$. It can be shown that the centres of rotation are given by
\[c_\pm=\pm\frac{\sin(\pi/n)}{\cos(\pi/n)+1}i.\]
All of the images $(\alpha_q\circ\alpha_p)^j(q)$, including those that are not vertices of the wedge, are the vertices of a regular $n$\--gon centred at $c_+$. Similarly, all images $(\alpha_p\circ\alpha_q)^j(q)$ are the vertices of a regular $n$\--gon centred at $c_-$. This configuration has the real axis as a line of symmetry, hence the configuration for $\Im(z)\ge0$ and the configuration for $\Im(z)\le0$ are each the configuration of vertices of the wedge. Since there are necessarily an odd number of images of $q$ on the wedge (the same number on $\eta$ as $\eta'$, remembering that $p$ is the same vertex on both), then $n\ge3$ is odd. The configuration of the vertices of the two overlapping regular $n$\--gons is the same as the configuration of two touching regular reflex $n$\--gons.
\end{proof}

\begin{cor}
If a wedge is given by $<\eta,p,\alpha>=<\nu,q,\alpha>$ tiles the disk around $p$ or $q$,  then the arc components of the radial generation $\rho_p$ and $\rho_q$ have the same curvature.
\end{cor}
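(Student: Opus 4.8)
The plan is to reduce the statement to a single numerical fact: that the radius $R_p$ of $\rho_p$ equals the radius $R_q$ of $\rho_q$. Since the curvature of a circular arc is the reciprocal of its radius, equal radii give equal curvature. The engine of the argument is that the area of a radially generated tile is completely determined by its generating angle and the radius of its closing arc, and in fact equals that of an ordinary circular sector.

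First I would establish the area formula: if $\overline{U}=<\eta,p,\alpha>$ with closing arc $\rho_p$ of radius $R_p$, then $\mathrm{area}(\overline{U})=\tfrac12\alpha R_p^2$. The cleanest way is to compute the oriented integral $\mathrm{area}(\overline{U})=\tfrac12\oint_{\partial\overline{U}}(x\,dy-y\,dx)$ in polar coordinates centred at $p$, where the integrand becomes $\tfrac12 r^2\,d\theta$. The boundary splits as $\eta\cup\rho_p\cup\eta'$ with $\eta'=\alpha_p(\eta)$. On $\rho_p$ we have $r\equiv R_p$ and $\theta$ sweeps exactly the opening angle $\alpha$, contributing $\tfrac12 R_p^2\alpha$. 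Since $\alpha_p$ is a rotation about $p$, it preserves $r$ and shifts $\theta$ by the constant $\alpha$, so along $\eta'$ the quantity $r^2\,d\theta$ agrees with its value along $\eta$; as $\eta$ and $\eta'$ are traversed in opposite senses around the boundary, their contributions cancel and only the arc term survives. (Geometrically this is the statement that replacing the two straight radii of the sector by $\eta$ and by its rotation $\eta'$ adds a region on one side and removes a congruent region on the other.)

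Now I would apply the identical computation to the second radial generation $\overline{U}=<\nu,q,\alpha>$, this time in polar coordinates centred at $q$, with $\nu'=\alpha_q(\nu)$; the same cancellation leaves $\mathrm{area}(\overline{U})=\tfrac12\alpha R_q^2$. The two generating angles are equal by the form of the hypothesis (and in any case both equal $\pi/n$ by Theorem~\ref{thm:equalangles}), so equating the two expressions gives $\tfrac12\alpha R_p^2=\tfrac12\alpha R_q^2$, whence $R_p=R_q$ and the curvatures $1/R_p$ and $1/R_q$ coincide.

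I expect the only delicate point to be justifying the exact cancellation of the $\eta$ and $\eta'$ contributions, i.e. that a radially generated tile has the same area as its associated sector. One must be careful that $\eta$ need not be a graph in the polar angle about $p$, so the cancellation should be argued through the oriented line integral $\tfrac12\oint r^2\,d\theta$ rather than through a naive ``area under a curve'' picture; the parametrisation $\eta'(s)=\alpha_p(\eta(s))$ makes the identity transparent once the opposite orientations are accounted for. As an independent check, one can argue directly from the tilings: the hypothesis yields a tiling of the disk of radius $R_p$ about $p$ into $2n$ congruent copies of $\overline{U}$, and (again using $\alpha=\pi/n$) a tiling of the disk of radius $R_q$ about $q$ into the same number $2n$ of copies, so $\pi R_p^2=2n\,\mathrm{area}(\overline{U})=\pi R_q^2$, recovering $R_p=R_q$.
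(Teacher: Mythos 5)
Your proof is correct, but your main argument takes a genuinely different route from the paper's. The paper argues globally: by Theorem~\ref{thm:equalangles}, $2n$ copies of the wedge tile the disk about $p$ (whose bounding circle consists of $2n$ copies of $\rho_p$) and also about $q$; since the tile's area is fixed, the two disks have equal area, hence equal radius, hence the arcs have equal curvature. That is precisely the ``independent check'' you append at the end, so you have the paper's proof as a remark. Your primary argument is instead local: via the oriented integral $\tfrac12\oint(x\,dy-y\,dx)$ and the rotation-invariance of that $1$-form about $p$, you show that \emph{any} radially generated tile $<\eta,p,\alpha>$ has area $\tfrac12\alpha R_p^2$, i.e.\ the same area as its associated circular sector, and then equate the two expressions coming from the two generations. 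This buys more than the paper's argument: it never uses the tiling hypothesis or the oddness of $n$, only that the two generating angles agree (which the statement of the corollary supplies directly), and it isolates a reusable fact about radially generated tiles. The paper's version is shorter and free of any integration, but is genuinely weaker in scope since it needs the disk tilings to exist. Your cancellation step is sound provided you work with the $1$-form $x\,dy-y\,dx$ rather than the polar expression $r^2\,d\theta$ near $p$ (the origin lies on the boundary), which you already flag as the delicate point.
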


\begin{proof}
From Theorem~\ref{thm:equalangles}, we have $\alpha=\pi/n$ for some odd $n\ge3$. Hence $2n$ copies of the wedge tile the disk. Since the area of the wedge doesn't change according to which point we tile the disk around, neither does the curvature of the circle bounding the disk, which is made up of $2n$ copies of $\rho_p$ or $\rho_q$.
\end{proof}

\begin{definition}
A wedge $<\eta,p,\pi/n>=<\nu,q,\pi/n>$ where $n\ge3$ is an odd integer will be called an {\em $n$\--wedge}.
\end{definition}

\begin{theorem}\label{thm:uncountable}
For any odd integer $n\ge3$, and fixed $p$, $q$ there are uncountably many $n$\--wedges of the form $\overline{U}=<\eta,p,\alpha>=<\nu,q,\alpha>$ (where $\alpha=\pi/n$) that tile the disk.
\end{theorem}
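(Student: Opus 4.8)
The plan is to build $n$-wedges one at a time from a single freely chosen curve, exploiting the fact that both radial generations use the \emph{same} angle $\pi/n$ (Theorem~\ref{thm:equalangles}). The guiding idea is that almost the entire boundary of a wedge is forced once a small ``seed'' arc is fixed: the two radial generations, about $p$ and about $q$, each replicate a given boundary curve by a rotation, so propagating these two rules outward from the seed reconstructs the whole contour, the only genuinely rigid feature being the single circular rim arc demanded by the disk. Because the seed may be drawn from an uncountable family of curves, this produces uncountably many wedges.

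Concretely, I would first normalise $p=-1$, $q=1$, $\alpha=\pi/n$, and keep the rotations $\alpha_p$ and $\alpha_q$ as set up in the proof of Theorem~\ref{thm:equalangles}. Next I would fix a simple seed curve $c$ running from $q$ outward to a chosen rim point $r$ with $|pr|$ equal to the intended disk radius, and declare $c$ to be the part of the generating curve $\eta$ lying beyond $q$. The requirement that the tile be radially generated about $q$ forces the boundary edge leaving $q$ on the opposite side to be $\alpha_q(c)$; the requirement that it be radially generated about $p$ forces the opposite radial side to be $\alpha_p(\eta)$; iterating these two substitutions (equivalently, acting by the group generated by $\alpha_p$ and $\alpha_q$, whose product $\alpha_q\circ\alpha_p$ is the rotation of order $n$ about $c_+$ identified in Theorem~\ref{thm:equalangles}) determines every remaining non-arc edge from $c$ alone. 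Closing the contour with the circular arc $\rho_p$ centred at $p$, and checking that its image under the construction is the corresponding arc $\rho_q$ centred at $q$, yields a contour $[\eta,p,\alpha]$ which by construction also equals a contour $[\nu,q,\alpha]$. Provided this contour is simple it bounds a tile $\overline U=\langle\eta,p,\alpha\rangle=\langle\nu,q,\alpha\rangle$, that is an $n$-wedge, and Lemma~\ref{lem:rgtile} then shows that $2n$ copies of it tile the disk about either centre.

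Finally the count: the admissible seeds form an uncountable set, since distinct curves $c$ give distinct boundary curves and hence distinct (indeed generically non-congruent) wedges, and a whole continuum of seeds is available, for instance a one-parameter family of arcs of varying curvature, or more generally every simple curve contained in a prescribed lens-shaped region, which is why even fractal seeds should be permissible. The step I expect to be the real obstacle is precisely this admissibility, namely proving that the generated contour is \emph{simple}: one must pin down the lens-shaped region in which $c$ is allowed to wander so that $c$, together with all of its images under the two generation rules, never meets itself or the rim arc. This is delicate because for $n\ge5$ the group $\langle\alpha_p,\alpha_q\rangle$ is non-discrete, so infinitely many images must be controlled at once; bounding them inside the lens and showing that the resulting closed curve both closes up and stays embedded is where the analytic work lies, after which distinctness and the uncountable count are immediate.
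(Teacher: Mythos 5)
Your construction is essentially the paper's: both build the wedge by taking the fixed vertex configuration from Theorem~\ref{thm:equalangles} and propagating a single ``groove'' curve from $q$ to the rim point $r_p$ around the boundary via the alternating rotations, closing up with the arcs. But the entire content of the theorem is the step you explicitly defer. Asserting that ``the admissible seeds form an uncountable set'' and then conceding that proving admissibility ``is where the analytic work lies'' leaves the statement unproved: without exhibiting a verified, nonempty open set of admissible choices you have not excluded the possibility that only the groove-free seed survives, which would give one wedge rather than uncountably many. The paper closes exactly this gap by restricting to straight-segment grooves, so that the wedge is parameterised by the single point $r_q\in\mathbb{C}$ (with $r_p=-r_q$ forced by the parallelism of opposite groove components), and by showing that simplicity can fail in only three explicit ways --- the groove at $q$ meeting $\rho_p$, the groove at $p$ meeting $\rho_q$, and an image of $\rho_p$ passing through an image of $q$. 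These yield concrete critical loci (two rays from $q$ at angles $\pm\pi/2n$ and a circle $|p-r_q|=R$ for an explicit $R$) bounding a nonempty open region $\mathcal{R}_q$ of admissible $r_q$, from which uncountability is immediate.

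Your stated reason for the difficulty is also a misdiagnosis, and it matters because it is what makes the verification look intractable in your framing. Although $\langle\alpha_p,\alpha_q\rangle$ is indeed generally non-discrete, the boundary of a single wedge never sees that whole group: the propagation only ever applies the alternating words, and since $\alpha_q\circ\alpha_p$ is a rotation by $2\pi/n$ of order exactly $n$, the seed and the arc $\rho_p$ have only finitely many images on the contour ($2n$ segment images and $n$ arc images in all, terminating when the orbit of $q$ reaches $p$). Simplicity is therefore a finite list of pairwise disjointness conditions, not a control problem over infinitely many images; identifying which finitely many conditions actually bind is precisely what the paper's three critical loci accomplish. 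If you want the extra generality of non-straight seeds (which the paper introduces only in the remark after the proof and in the family $\widetilde{C}_{n,k}^{t*}$), the clean route is to establish the open region $\mathcal{R}_q$ for straight grooves first and then perturb the groove path within the resulting slack, rather than to attempt an embeddedness argument for arbitrary curves directly.
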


\begin{proof}
Suppose $r_p$ and $\alpha_p(r_p)$ are the endpoints of the arc $\rho_p$, and $r_q$ and $\alpha^{-1}_q(r_q)$ are the endpoints of $\rho_q$, and again impose a complex structure with $p=-1,\,q=1$ . Then we necessarily have $|r_p|\ge1$ since otherwise radially tiling about $p$ would produce a disk that did not contain $q$. Suppose (at least for now) that $r_p\neq q$, and that $q$ and $r_p$ are connected by a straight line segment. Then each image $(\alpha_q\circ\alpha_p)^j(q)$ is connected to the corresponding image of $(\alpha_q\circ\alpha_p)^j(r_p)$ via the image under the same map of the straight line segment connecting $q$ and $r_p$. At each of the $n$ images of $q$, there are two images of this straight line segment which meet at angle of $\pi/n$. We call any such pair a {\em groove}, and say the {\em length} of the groove is $|r_p-q|$.

\noindent Recall that the images of $q$ under the alternating maps $\alpha_p,\,\alpha_q$ form the vertices of a regular reflex $n$\--gon, with the final image being $p$. Hence
\[(\alpha_q\circ\alpha_p)^{(n-1)/2}(q)=p,\]
so that the groove at $q$ is rotated by an angle of $(2\pi/n)(n-1)/2=\pi(n-1)/n$ to give the groove at $p$. Since the internal angle of a groove is $\pi/n$, and $\pi(n-1)/n+\pi/n=\pi$, the outer components of the grooves (i.e. the straight line segments connecting $q$ to $r_p$ and $p$ to $r_q$) are parallel. Moreover, $|r_p-q|=|r_q-p|$, so that $|r_p|=|r_q|$.

So the position of $r_p$ is always $\pi_0(r_q)$, i.e. $r_p$ is the image of $r_q$ under rotation by $\pi$ about the origin. Hence the locus $\mathcal{R}_p$ of $r_p$ for which $\overline{U}$ is a tile (and hence a wedge) is isometric to the corresponding locus $\mathcal{R}_q$ for $r_q$, and one is obtained from the other by rotation of $\pi$ about the origin. Since the configuration of vertices is symmetric about the imaginary axis, then the loci are symmetric about the imaginary axis. Hence each locus is symmetric about the real axis.

According to the construction, $\overline{U}$ is a wedge if and only if it is a tile. It fails to be a tile if and only if $r_q$ is chosen such that the boundary of $\overline{U}$ has a self intersection. There are three ways in which this may happen.
\begin{enumerate}[(1)]
\item If the groove at $q$ intersects $\rho_p$ at a point other than $r$. Since both components of the groove have the same length, and both $p$ and $q$ lie on the real axis, this is equivalent to the groove having the real axis as its angle bisector. Since the internal angle of the groove is $\pi/n$, the critical locus for $r_q$ in this case contains the ray emanating from $q$ with angle $-\pi/2n$.
\item We can make a similar argument about the intersection of the groove at $p$ with $\rho_q$. However, we may just argue by symmetry: since the locus is symmetric about the real axis, the critical locus contains the ray emanating from $q$ with angle $\pi/2n$.
\item Finally, it can happen that some image of $\rho_p$ may intersect an image of $q$. Since we are only interested in the `smallest' value of $r_q$ that makes this happen (as $r_q$ changes maybe there will be more intersections, but even the first intersection is enough to mean $\overline{U}$ is not a wedge) we look for a condition on $r_q$ such that $\alpha_q(\rho_p)\cap\alpha_p(q)$ is non\--empty. Note that $\alpha_p(q)$ is fixed, whereas $\alpha_q(\rho_p)$ depends on $r_q$ since $r_q$ is an endpoint of $\rho_p$. Hence the image of this component of the critical locus under $\alpha_q$ is $\alpha_q(\rho_p)$ where $\rho_q$ is chosen such that there is a single intersection with $\alpha_p(q)$. The centre of curvature for $\alpha_q(\rho_p)$ is $\alpha_q(p)$, hence the radius of the arc through $\alpha_p(q)$ is
\begin{align*}
R	&=	|\alpha_q(p)-\alpha_p(q)|\\
	&=	2|1-2e^{\pi i/n}|\\
	&=	2(1+\sqrt{5-4\cos(\pi/n)}).
\end{align*}
So this component of the critical locus for  $r_q$ is given by $|p-r_q|=R$ (recall that $p,\,q$ were fixed so the value of $R$ would need to be rescaled for other positions of $p,\,q$).
\end{enumerate}
The critical loci bound an open region $\mathcal{R}_q$ whose closure contains $q$, and for all $r_q\in\mathcal{R}_q\cup{q},$ $\overline{U}$ is a wedge. Even though $\mathcal{R}_q$ gets smaller as $n$ gets larger, $\mathcal{R}_q$ is never empty while $n<\infty$. Hence for each fixed odd $n\ge3$, there are uncountably many wedges (Figure~ref{fig:critloc}).
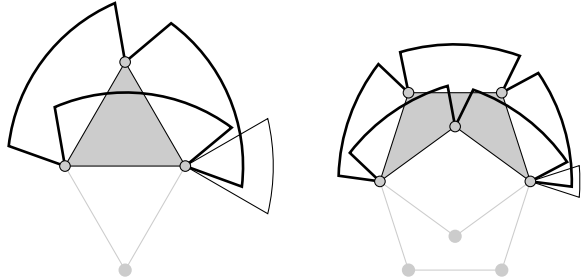
\begin{figure}[htbp]\centering
\begin{tikzpicture}[scale=0.8]
\coordinate (p) at (-1,0);
\coordinate (q) at (1,0);
\coordinate (r) at (0,1.73);
\coordinate (s) at (0,-1.73);
\coordinate (t-1-1) at (2,-0.36);
\draw[draw=black!20] (p) -- (s) -- (q);
\filldraw[fill=black!20] (p) -- (q) -- (r) -- (p) -- cycle;
\draw[shape=circle] (q) -- (2.37,0.79) arc (13.22:-13.22:3.46) -- (q) -- cycle;
\draw[line width=1pt] (q) -- ++(-20:1) arc (-6.6:53.4:2.96) -- (r) -- (-0.17,2.71) arc (113.4:173.4:2.96) -- (p) -- (-1.17,0.98) arc (113.4:53.4:2.96) -- (q);
\node[shape=circle,draw,fill=black!20,inner sep = 0,minimum size=4pt] at (p) {};
\node[shape=circle,draw,fill=black!20,inner sep = 0,minimum size=4pt] at (q) {};
\node[shape=circle,draw,fill=black!20,inner sep = 0,minimum size=4pt] at (r) {};
\node[shape=circle,draw=black!20,fill=black!20,line width=1pt,inner sep = 0,minimum size=4pt] at (s) {};
\end{tikzpicture}
\qquad
\begin{tikzpicture}[scale=1]
\coordinate (p) at (-1,0);
\coordinate (q) at (1,0);
\coordinate (r-1) at (0.62,1.18);
\coordinate (r-2) at (-0.62,1.18);
\coordinate (r-3) at (0.62,-1.18);
\coordinate (r-4) at (-0.62,-1.18);
\coordinate (s-1) at (0,0.73);
\coordinate (s-2) at (0,-0.73);
\draw[draw=black!20] (p) -- (r-4) -- (r-3) -- (q) -- (s-2) -- (p) -- cycle;
\filldraw[fill=black!20] (p) -- (r-2) -- (r-1) -- (q) -- (s-1) -- (p) -- cycle;
\draw[shape=circle] (q) -- (1.65,0.21) arc (4.545:-4.545:2.66) -- (q) -- cycle;
\draw[line width=1pt] (q) -- (1.55,-0.07) arc (-1.6:34.4:2.55) -- (r-1) -- (0.86,1.67) arc (70.04:106.4:2.55) -- (r-2) -- (-1.02,1.56) arc (142.4:178.4:2.55) -- (p) -- (-1.4,0.38) arc (142.4:106.4:2.55) -- (s-1) -- (0.24,1.22) arc (70.4:34.4:2.55) -- (q) -- cycle;
\node[shape=circle,draw,fill=black!20,inner sep = 0,minimum size=4pt] at (p) {};
\node[shape=circle,draw,fill=black!20,inner sep = 0,minimum size=4pt] at (q) {};
\node[shape=circle,draw,fill=black!20,inner sep = 0,minimum size=4pt] at (r-1) {};
\node[shape=circle,draw,fill=black!20,inner sep = 0,minimum size=4pt] at (r-2) {};
\node[shape=circle,draw,fill=black!20,inner sep = 0,minimum size=4pt] at (s-1) {};
\node[shape=circle,draw=black!20,fill=black!20,line width=1pt,inner sep = 0,minimum size=4pt] at (s-2) {};
\node[shape=circle,draw=black!20,fill=black!20,line width=1pt,inner sep = 0,minimum size=4pt] at (r-3) {};
\node[shape=circle,draw=black!20,fill=black!20,line width=1pt,inner sep = 0,minimum size=4pt] at (r-4) {};
\end{tikzpicture}
\caption{The vertex configurations and critical loci for $n=3$ and $n=5$.}\label{fig:critloc}
\end{figure}
\end{proof}

\noindent Note that taking $r_q=q$ defines an $n$\--wedge without grooves (i.e. with groove length 0). Such a wedge is always symmetric.

The groove length was defined as $|r_q-q|$, not as the length of the straight line segment connecting $r_q$ to $q$. This is because unless we require our wedge to be symmetric, the groove need not be constructed from straight line segments. They can be any simple path segments such that neither it nor any of its images intersect with the circle bounding the tiled disk, or any other tiles in the tiling.

For a symmetric $n$\--wedge the line through $r_p$ and $r_q$ is colinear with the line through $p$ and $q$, so in this case $\mathcal{R}_q$ is an interval rather than a region.

\begin{cor}\label{prop:no3wedge}
A tile radially generated by 3 vertices that tiles the disk does not exist.
\end{cor}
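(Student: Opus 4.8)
The plan is to argue by contradiction, first collapsing all the data to a single integer $n$ and a single radius, and then showing that the rigid vertex configuration forced by Theorem~\ref{thm:equalangles} has no room for a third generating vertex.

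Suppose a tile $\overline{U}$ tiles the disk and is radially generated by three of its vertices, $\overline{U}=<\eta_1,p_1,\alpha_1>=<\eta_2,p_2,\alpha_2>=<\eta_3,p_3,\alpha_3>$. For each pair $\{i,j\}$ the tile is in particular radially generated by $p_i$ and $p_j$, and the argument of Theorem~\ref{thm:equalangles} applies verbatim to this pair, giving $\alpha_i=\alpha_j=\pi/n_{ij}$ with $n_{ij}\ge 3$ odd. Since $\alpha_i$ is the interior angle of $\overline{U}$ at $p_i$, a single number, the $n_{ij}$ all coincide; write $\alpha_i=\pi/n$ for every $i$. By the corollary following Theorem~\ref{thm:equalangles}, the three arcs $\rho_{p_1},\rho_{p_2},\rho_{p_3}$ share the common radius $R$ of the tiled disk, and each subtends the angle $\pi/n$ at its centre.

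Next I would exploit the configuration isolated in the proofs of Theorems~\ref{thm:equalangles} and~\ref{thm:uncountable}. Fixing the pair $\{1,2\}$ and placing $p_1,p_2$ symmetrically on a coordinate axis, the ``sharp'' vertices of $\overline{U}$ (the corners at which the interior angle equals $\pi/n$) are images of $p_2$ under the group generated by the two rotations, and so lie on two congruent regular reflex $n$-gons interchanged by reflection in the perpendicular bisector of $p_1p_2$. These two polygons meet the line $p_1p_2$ in exactly the two points $p_1,p_2$, and share only these two ``touching'' vertices. Crucially this sharp-vertex set depends only on $p_1,p_2$ and $n$, not on the shape of the grooves, hence it is intrinsic to $\overline{U}$ and must equally be the sharp-vertex set produced by the pairs $\{1,3\}$ and $\{2,3\}$. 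The generating vertices $p_1,p_2,p_3$ all lie in this intrinsic set and each, in addition, carries a radius-$R$ arc centred at it.

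The heart of the matter, and the step I expect to be the main obstacle, is to show that a sharp vertex can be a generating vertex only when it is one of the two touching points of the relevant pair; this at once bounds the number of generating vertices by two and yields the contradiction. For $n\ge 5$ the rigidity should force $p_1,p_2,p_3$ to be three vertices of a single regular $n$-gon that are pairwise $(n-1)/2$ steps apart, a configuration excluded by a short residue computation modulo $n$. The delicate case is $n=3$, where the sharp vertices form a single equilateral triangle whose three corners are a priori interchangeable, so the reflection symmetries alone do not decide the matter. Here I would invoke the containment $\overline{U}\subseteq\overline{D_1}\cap\overline{D_2}\cap\overline{D_3}$ in the three disks of radius $R$ about the generating points: this forces each arc $\rho_{p_i}$ to bulge toward the common circumcentre, and one then argues that three such radius-$R$ arcs cannot be joined by grooves into a simple contour whose interior angles at $p_1,p_2,p_3$ all equal $\pi/3$ without the contour either failing to be simple or failing to tile the disk. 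Pinning down this last incompatibility rigorously, i.e. ruling out an exactly threefold-symmetric ``wedge'', is where the real work lies.
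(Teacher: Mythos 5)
Your plan is essentially the paper's own argument: reduce to a common angle $\pi/n$ by applying Theorem~\ref{thm:equalangles} pairwise, observe that each pair of generating vertices forces the rigid regular reflex $n$-gon vertex configuration so that a third vertex is impossible for $n\ge5$, and then handle $n=3$ separately because the equilateral-triangle vertex configuration is realisable there. The paper's proof is no more detailed than yours on the $n\ge5$ step, so you lose nothing there. For the $n=3$ case you have in fact already named the paper's key observation --- each generating vertex sits opposite an arc of radius $R$ bulging away from it, so the tile would be convex --- and the ``real work'' you defer is only one further sentence: a convex tile whose boundary contains a circular arc cannot share that arc with a congruent convex neighbour (one side of a shared arc must be concave), so all three arcs would have to lie on the bounding circle of the disk, whereas tiling about any one vertex $p_i$ puts only the images of $\rho_{p_i}$ on that circle. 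With that observation added, your argument closes and coincides with the paper's.
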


\begin{proof}
Any two generating vertices of wedge have images in the configuration of a regular reflex $n$\--gon. Adding a new generating vertex would add a similar configuration between itself and each of the original vertices, and the overall configuration would not be a regular reflex $n$\--gon for $n\ge5$. For $n=3$, it is possible to satisfy the vertex configuration (arranging the vertices in an equilateral triangle) but not the arc configuration): each vertex is opposite an arc so the resulting shape would be convex.
\end{proof}

\begin{cor}\label{cor:2}
A symmetric wedge has one only line of symmetry.
\end{cor}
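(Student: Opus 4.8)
The plan is to show that any line of symmetry of a symmetric wedge is forced to be the perpendicular bisector of the segment $pq$ joining its two generating vertices; since this is a single determined line and a symmetric wedge has at least one line of symmetry by definition, it then has exactly one. Throughout I would work in the complex coordinates of Theorems~\ref{thm:equalangles} and~\ref{thm:uncountable}, with $p=-1$, $q=1$ and $\alpha=\pi/n$ for an odd integer $n\ge3$. Recall from the discussion following Theorem~\ref{thm:uncountable} that for a \emph{symmetric} wedge the grooves are straight line segments and the arc endpoints $r_p,r_q$ are colinear with $p$ and $q$, so in particular $r_p,r_q\in\R$.

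First I would record the structure of the boundary $\partial\overline U$: it consists of the two circular arcs $\rho_p$ (centred at $p$) and $\rho_q$ (centred at $q$) together with the straight groove segments. A symmetry of the tile fixes the tile setwise, hence fixes $\partial\overline U$ and preserves curvature, so any line of symmetry $\ell$ sends circular arcs to circular arcs of equal radius. As the grooves are straight, the \emph{only} maximal circular-arc components of $\partial\overline U$ are $\rho_p$ and $\rho_q$; therefore $\ell$ permutes the two-element set $\{\rho_p,\rho_q\}$, and there are just two cases.

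Next I would dispose of the two cases. If $\ell$ fixes each arc, then it preserves the circle carrying $\rho_p$ and hence fixes its centre $p$, and likewise fixes $q$; since $p\ne q$ this forces $\ell$ to be the line through $p$ and $q$, i.e.\ the real axis. I then rule this out by examining the endpoints of $\rho_p$: one endpoint $r_p$ lies on the real axis, while the other, $\alpha_p(r_p)=p+e^{i\alpha}(r_p-p)$, has imaginary part $(r_p-p)\sin\alpha\ne0$ since $0<\alpha<\pi$ and $r_p\ne p$. Reflection in the real axis (complex conjugation) sends this off-axis endpoint to $\alpha_p^{-1}(r_p)$, the rotation of $r_p$ about $p$ by $-\alpha$, which lies on the opposite side from $\rho_p$ and hence is not on $\rho_p$; so the real axis does not preserve $\rho_p$, contradicting that $\ell$ fixes it. Consequently $\ell$ must swap the arcs, whence $\ell(p)=q$, and the only reflection carrying $p$ to $q$ is the perpendicular bisector of $pq$. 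This pins $\ell$ down uniquely, and combined with the existence of a line of symmetry the corollary follows.

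I expect the main obstacle to be the bookkeeping of the second paragraph, namely being certain that $\rho_p$ and $\rho_q$ are genuinely the only circular-arc components (using nondegeneracy of the arcs and the straightness of the grooves in the symmetric case) and that a reflection permutes them as whole arcs rather than mapping one onto a piece of the other. Once the clean dichotomy ``fix both / swap'' is secured, the decisive geometric point is the elimination of the line $pq$, and that rests only on the elementary fact that the two endpoints of $\rho_p$ are not symmetric about $pq$ --- one lying on it and one off it, precisely because $\alpha\in(0,\pi)$. Everything after that is forced, yielding uniqueness of the axis.
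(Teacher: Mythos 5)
Your overall strategy --- showing that any axis of symmetry must carry $p$ to $q$ and is therefore the perpendicular bisector of $pq$ --- is sound and ultimately runs parallel to the paper's own argument, which deduces from Corollary~\ref{prop:no3wedge} that a second axis would create a third generating vertex; your explicit elimination of the line $pq$ even covers a case the paper passes over in silence. However, the step you yourself flag as the crux contains a genuine error: the boundary of a symmetric $n$-wedge does \emph{not} consist only of $\rho_p$, $\rho_q$ and straight groove segments. The wedge has $n$ vertices ($p$, $q$ and the images of $q$ with non-negative imaginary part), and between consecutive vertices the boundary is a circular arc, so besides $\rho_p$ and $\rho_q$ there are $n-2$ further arcs. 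Already for $n=3$, $t=0$ the tile is a curved triangle with vertices $p=-1$, $q=1$, $v=i\sqrt{3}$: here $\rho_p$ is the side $qv$ and $\rho_q$ is the side $pv$, but the third side, joining $p$ to $q$, is the arc of radius $2$ centred at the \emph{external} point $-i\sqrt{3}$ --- a circular arc of the same radius as $\rho_p$ and $\rho_q$, not a groove. Consequently the claim that $\rho_p$ and $\rho_q$ are the only maximal circular-arc components is false, and your dichotomy (a symmetry either fixes both of $\rho_p,\rho_q$ or swaps them) is not established: a priori the axis could send $\rho_p$ to one of the other, equal-radius arcs.

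The gap is repairable, and the repair is essentially the paper's move: an isometry of the tile onto itself conjugates the radial generation $<\eta,p,\alpha>$ into a radial generation of the same tile about the image of $p$, so any symmetry must permute the set of generating vertices, which by Corollary~\ref{prop:no3wedge} is exactly $\{p,q\}$. This yields your two cases --- fix $p$ and $q$ (forcing $\ell$ to be the line $pq$) or swap them (forcing the perpendicular bisector) --- without any appeal to the arc decomposition. Your endpoint computation ruling out the line $pq$, namely that $r_p$ is real while $\alpha_p(r_p)$ has imaginary part $(r_p-p)\sin\alpha\neq 0$ and reflects to $\alpha_p^{-1}(r_p)\notin\rho_p$, is correct and worth keeping, since the paper's two-line proof does not address this case; alternatively one may observe that all vertices of the wedge lie in the closed upper half-plane with at least one strictly above the axis, so reflection in $pq$ cannot preserve the vertex set.
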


\begin{proof}
In this case the vertices about which the tile is radially generated are mirror images of each other in a line of symmetry. If there were another line of symmetry, it would imply more vertices about which the tile may be radially generated, contradicting Corollary~\ref{prop:no3wedge}.
\end{proof}

\section{Families of subtilings}

In the previous section we introduced the concept of a wedge, and classified all types of wedge that may be used to tile disks. In this section we will present two known ways of subtiling a wedge to produce families of monohedral tilings of the disk.

\begin{definition}
A {\em family of disk tilings} is an equivalence class of tilings up to retiling or scaling.
\end{definition}

\noindent So a family of monohedral disk tilings may be described in terms of its fundamental tile.

\subsection{$D_n^t$}\label{sect:dnt}

\begin{prop}\label{d:prop}
Let $\overline{U}$ be a symmetric $n$\--wedge with $n\ge3$ odd. Then $\overline{U}$ may be tiled by two congruent tiles.
\end{prop}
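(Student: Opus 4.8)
The plan is to subtile $\overline U$ by cutting it along its line of symmetry, so that the two resulting pieces are mirror images of one another and hence congruent in the sense of the paper. By Corollary~\ref{cor:2} a symmetric wedge has exactly one line of symmetry $L$, and, as noted in the proof of that corollary, the two generating vertices $p$ and $q$ are mirror images of each other in $L$; therefore $L$ is the perpendicular bisector of the segment $pq$. Adopting the normalization $p=-1$, $q=1$ of the previous proofs, $L$ is the imaginary axis, and the reflection $\sigma$ in $L$ satisfies $\sigma(\overline U)=\overline U$ while interchanging $p\leftrightarrow q$ and the arcs $\rho_p\leftrightarrow\rho_q$.

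First I would pin down $L\cap\overline U$. From the vertex configuration described in the proof of Theorem~\ref{thm:uncountable}, the vertices of a symmetric $n$-wedge lie in one closed half-plane cut off by the $pq$-axis, with $p$ and $q$ the two corners on that axis, the remaining vertices occurring in conjugate pairs symmetric about $L$, and a single vertex lying on $L$ itself. Combining this with the fact that each boundary arc is either one of $\rho_p,\rho_q$, a groove segment, or the free generating curve, I would show that the boundary $\gamma$ meets $L$ in exactly two points: the lone vertex on $L$, and the point where $L$ crosses the lower boundary separating $p$ from $q$. Then $s:=L\cap\overline U$ is the single straight segment joining these two points, and it lies in $\overline U$.

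Cutting $\overline U$ along $s$ produces two regions $\overline U_1$ and $\overline U_2$ lying on opposite sides of $L$. Since $\sigma$ fixes $s$ pointwise and swaps the two half-planes bounded by $L$, we have $\sigma(\overline U_2)=\overline U_1$, so $\overline U_1$ is the mirror image of $\overline U_2$ and the two are congruent. Each $\overline U_i$ is bounded by a simple contour, namely half of $\gamma$ together with $s$, enclosing a contractible region, so each is a genuine tile; hence $\{\overline U_1,\overline U_2\}$ is a monohedral subtiling of $\overline U$ into two congruent tiles, as claimed.

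The step I expect to be the main obstacle is the second one: verifying, uniformly in the odd integer $n$ and across the whole admissible family of (possibly grooved, possibly non-straight) generating curves, that $L$ crosses $\gamma$ exactly twice, so that $L\cap\overline U$ is connected. For the grooved wedges the boundary zig-zags back and forth, and one must rule out $L$ leaving and re-entering $\overline U$, which is exactly what a symmetric contractible region can fail to do. Once connectedness is established, the simplicity of each half-contour, the contractibility of each half, and the congruence via $\sigma$ are all immediate.
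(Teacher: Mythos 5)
Your proposal is correct and takes exactly the same approach as the paper, which disposes of this proposition in a single sentence: the two tiles are the left- and right-handed halves of $\overline{U}$ obtained by cutting along its line of symmetry. The connectedness issue you flag (that $L\cap\overline{U}$ is a single segment) is a genuine point of rigour that the paper does not address at all, so your elaboration only strengthens the argument.
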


\begin{proof}
These tiles are just the left\-- and right\--handed parts of $\overline{U}$ obtained by splitting $\overline{U}$ in half through its line of symmetry.
\end{proof}

\noindent Suppose $\overline{U}$ is an $n$\--wedge with groove length $t$ (recall that the locus $\mathcal{R}_q$ for $r_q$ is an interval since $\overline{U}$ is symmetric), and suppose scaling is such that $\sup|r_q-q|=1$. We denote the family of tilings obtained from this tile by $D_n^t$, where $n\ge3$ is an odd integer and $t\in[0,1)$. Any tiling in $D_n^t$ contains $4n$ tiles, and $|D_n^t|$=2; that is to say, this collection of tiles may tile the disk in two different ways (Figure~\ref{fig:dnt}).

\begin{figure}[htbp]\centering
\begin{tikzpicture}[scale=1.8]
\draw (0,0) circle (1);
\foreach\x in {0,...,5}	{
	\draw[name path=curve] (60*\x:1) arc(60*(\x+1):60*(\x+2):1);
	\path[name path=line] (60*\x+60:1) -- (60*\x:0.5);
	\draw [name intersections={of=curve and line}] (60*\x+60:1) -- (intersection-1);
}
\end{tikzpicture}
\qquad
\begin{tikzpicture}[scale=1.8]
\draw (0,0) circle (1);
\def\t{0.1}
\foreach \x in {0,...,6}	{
\draw (60*\x:1) -- ++(180+60*\x:\t) node (c-\x-1) {} -- ++(60+60*\x:\t) arc(60+60*\x:90+60*\x:1) node (c-\x-2) {} arc(90+60*\x:120+60*\x:1) -- (0,0);}
\foreach \x in {0,...,6}	{
\draw[rotate=60*\x] ([rotate=60*\x]c-1-1.center) -- ([rotate=60*\x]c-0-2.center);}
\end{tikzpicture}
\qquad
\begin{tikzpicture}[scale=1.8]
\draw (0,0) circle (1);
\def\t{0.35}
\foreach \x in {0,...,6}	{
\draw (60*\x:1) -- ++(180+60*\x:\t) node (c-\x-1) {} -- ++(60+60*\x:\t) arc(60+60*\x:90+60*\x:1) node (c-\x-2) {} arc(90+60*\x:120+60*\x:1) -- (0,0);}
\foreach \x in {0,...,6}	{
\draw[rotate=60*\x] ([rotate=60*\x]c-1-1.center) -- ([rotate=60*\x]c-0-2.center);}
\end{tikzpicture}
\\\medskip\medskip
\begin{tikzpicture}[scale=1.8]
\draw (0,0) circle (1);
\foreach \x in {0,...,9}	{
\draw (36*\x:1) arc (36+36*\x:72+36*\x-18:1) node (c-\x-1) {} arc (36+36*\x+18:72+36*\x:1) node (c-\x-2) {} arc (108+36*\x:144+36*\x:1);}
\foreach \x in {0,...,9}	{
\draw[rotate=36*\x] ([rotate=36*\x]c-1-1.center) -- ([rotate=36*\x]c-0-2.center);}
\end{tikzpicture}
\qquad
\begin{tikzpicture}[scale=1.8]
\draw (0,0) circle (1);
\def\t{0.1}
\foreach \x in {0,...,9}	{
\draw (36*\x:1) -- ++(180+36*\x:\t) -- ++(36+36*\x:\t) arc(36+36*\x:72+36*\x-18:1) node (c-\x-1) {} arc(36+36*\x+18:72+36*\x:1) -- ++(-108+36*\x:\t) node (c-\x-2) {} -- ++(108+36*\x:\t) arc (108+36*\x:144+36*\x:1) -- (0,0);}
\foreach \x in {0,...,9}	{
\draw[rotate=36*\x] ([rotate=36*\x]c-1-1.center) -- ([rotate=36*\x]c-0-2.center);}
\end{tikzpicture}
\qquad
\begin{tikzpicture}[scale=1.8]
\draw (0,0) circle (1);
\def\t{0.22}
\foreach \x in {0,...,9}	{
\draw (36*\x:1) -- ++(180+36*\x:\t) -- ++(36+36*\x:\t) arc(36+36*\x:72+36*\x-18:1) node (c-\x-1) {} arc(36+36*\x+18:72+36*\x:1) -- ++(-108+36*\x:\t) node (c-\x-2) {} -- ++(108+36*\x:\t) arc (108+36*\x:144+36*\x:1) -- (0,0);}
\foreach \x in {0,...,9}	{
\draw[rotate=36*\x] ([rotate=36*\x]c-1-1.center) -- ([rotate=36*\x]c-0-2.center);}
\end{tikzpicture}
\caption{$D_n^t$}\label{fig:dnt}
\end{figure}

Exceptionally, increasing the groove length of a symmetric $3$\--wedge to the critical value $t=1$ similarly splits the symmetric $3$\--wedge into congruent pieces with opposite orientations. We call this family $D_3^1$ (Figure~\ref{fig:d31}) as it shares the same combinatorial properties as other tilings of type $D_n^t$.

\begin{figure}[htbp]\centering
\begin{tikzpicture}[scale=1.8]
\draw (0,0) circle (1);
\foreach\x in {0,...,5}	{
\draw[name path=curve] (60*\x:1)+(60*\x+120:0.422) arc(60*(\x+1):60*(\x+2):1) -- (0,0);
\path[name path=line] (0,0) -- (60*\x+60:1);
\draw[name intersections={of=curve and line}] (60*\x+60:1) -- (intersection-1);
\draw[rotate around={60:(intersection-1)}, name intersections={of=curve and line}] (60*\x+60:1) -- (intersection-1);}
\end{tikzpicture}
\caption{$D_3^1$}\label{fig:d31}
\end{figure}

\subsection{$\widetilde{C}_{n,k}^{t*}$}

\begin{prop}\label{c:prop}
Let $\overline{U}$ be an $n$\--wedge. Then, provided the groove is sufficiently bounded, $\overline{U}$ may be tiled by $k$ congruent tiles for any positive integer $k$.
\end{prop}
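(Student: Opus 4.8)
The plan is to subtile $\overline{U}$ by fanning out from the vertex $p$, subdividing the generating rotation into $k$ equal parts. Writing $\alpha=\pi/n$ for the generating angle and setting $\beta=\alpha/k=\pi/(nk)$, I would introduce the intermediate curves $\eta_j=(j\beta)_p(\eta)$ for $j=0,1,\ldots,k$, so that $\eta_0=\eta$ and $\eta_k=\eta'=\alpha_p(\eta)$. These $k+1$ curves all emanate from $p$ and terminate on the arc $\rho_p$, and consecutive pairs $\eta_j,\eta_{j+1}$ cut $\overline{U}$ into $k$ regions $W_0,\ldots,W_{k-1}$, where $W_j$ is bounded by $\eta_j$, $\eta_{j+1}$ and the sub\--arc of $\rho_p$ between their outer endpoints. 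By construction each region is radially generated, $W_j=<\eta_j,p,\beta>$, and $W_{j+1}=\beta_p(W_j)$, so all $k$ pieces are rotations of one another about $p$ and hence congruent.

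The only thing left to check is that this really is a tiling: that each $W_j$ is a genuine tile and that the pieces meet only along the dividing curves. Since $W_{j+1}=\beta_p(W_j)$, it suffices to verify the single non\--crossing condition that $\eta$ meets its rotation $\beta_p(\eta)$ only at $p$; the situation is then exactly that of Lemma~\ref{lem:rgtile}, with the $k$ images $(j\beta)_p(\eta)$ playing the role of the repeated radial cuts, except that here they sweep out only the angular sector of width $\alpha$ occupied by $\overline{U}$ rather than the whole disk. Granting the non\--crossing condition, each $\eta_j$ is simple and disjoint from its neighbours away from $p$, so the boundaries $[\eta_j,p,\beta]$ are simple contours and the $W_j$ are tiles that partition $\overline{U}$.

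The hard part is quantifying ``sufficiently bounded''. Unlike a symmetric radially generated tile, the curve $\eta$ is not radial: it carries the groove and the arc $\rho_q$ near the second vertex $q$, so its angular coordinate about $p$ need not be monotone, and a small rotation $\beta_p(\eta)$ can in principle cross $\eta$ precisely where these features bulge. I would therefore bound the groove so that the entire excursion of $\eta$ away from the straight segment $pr_p$ is confined to an angular sector about $p$ narrow enough that rotating by $\beta$ cannot bring $\beta_p(\eta)$ back across $\eta$. Since $\beta=\pi/(nk)$ decreases as $k$ grows, the admissible groove shrinks with $k$; the main technical content is to make this bound explicit and to confirm that for every $k$ there remains a nonempty range of grooves (containing at least the degenerate groove of length $0$, which recovers the symmetric sub\--tiling) for which the non\--crossing condition holds.
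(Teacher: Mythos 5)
Your proposal matches the paper's proof: the paper likewise sets $\alpha'=\alpha/k$ and tiles $\overline{U}$ by $k$ copies of $\overline{U}_k=<\eta,p,\alpha'>$, with the caveat ``provided the groove length is sufficiently small to avoid intersections.'' Your additional discussion of the non\--crossing condition and of why the admissible grooves form a nonempty set for every $k$ (containing at least the groove of length $0$) only elaborates on what the paper leaves implicit.
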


\begin{proof}
Let $\overline{U}=<p,\eta,\alpha>$. That is, $\overline{U}=\eta\cup\alpha_p(\eta)\cup\rho_p$. Let $\alpha'=\alpha/k$. Then, provided the groove length is sufficiently small to avoid intersections, $k$ copies of $\overline{U}_k=<p,\eta,\alpha'>$ tile $\overline{U}$.
\end{proof}

\noindent Since $\overline{U}_k$ is radially generated about $p$ by an angle $\pi/nk$, $nk$ copies of $\overline{U}_k$ tile the disk about $p$ according to Lemma~\ref{lem:rgtile}. Suppose $\overline{U}$ is also generated about point $q$. Since $k$ copies of $\overline{U}_k$ tile $\overline{U}$, one may tile the disk about $q$ with unions of $k$ copies of $\overline{U}_k$.

In the case that the $n$\--wedge $\overline{U}$ is not symmetric, the family of tilings obtained from this tile will be denoted $\widetilde{C}_{n,k}^{t*}$ where $n\ge3$ is an odd integer, $k\ge1$ is an integer, and $t\in[0,1)$. We use the notation $t*$ to indicate that both the length and the path of the groove have been fixed. As stated, each tiling consists of $2nk$ tiles. We have $|\widetilde{C}_{n,k}^{t*}|=4$ since we can tile about either vertex of the $n$\--wedge obtained as the union of $k$ wedges, and for each of these tilings we can take the tiling with the opposite orientation. Note that since these wedges are not required to be symmetric the grooves may be `decorated' in any way, provided it is cyclically consistent and doesn't produce any intersections. In a similar way to Theorem~\ref{thm:uncountable}, there are uncountably many $t*$ for each $n,\,k$ as the corresponding critical locus is never empty. However, it becomes so small that with straight edges grooves it would be practically difficult to distinguish between illustrations of $C_{n,k}^t$ and $\widetilde{C}_{n,k}^{t*}$ for $n\ge5$, so in Figure~\ref{fig:c3kt} we present only tilings of type $\widetilde{C}_{3,k}^{t*}$.

\begin{figure}[htbp]\centering
\begin{tikzpicture}
\def\s{0.8}
\def\k{2};
\def\prad{2.5*\s};
\def\qrad{0.54*\s};
\node (p) at (-\s,0) {};
\node (q) at (\s,0) {};
\path [name path=arcp] (p) circle (\prad);
\path [name path=arcq] (q) circle (\qrad);
\path [name intersections={of=arcp and arcq, name=i, total=\t}];
\node (r) at (i-1) {};
\pgfresetboundingbox
\node [draw] at (p) [circle through=(r)] {};
\pgfextractangle{\aq}{q}{r};
\pgfextractangle{\ap}{p}{r};
\foreach \x in {0,...,5}	{
\draw (-\s,0) ++(60*\x:2*\s) ++(60*\x+\aq:\qrad) -- ++(180+60*\x+\aq:\qrad) -- ++(60+60*\x+\aq:\qrad) arc (60*\x+60+\ap:60*\x+120+\ap:\prad) -- (-\s,0);
\foreach \y in {2,...,\k}	{
\draw (-\s,0) ++(60*\x:2*\s) -- ++(60*\x+\y*60/\k-60/\k+\aq:\qrad) arc (60*\x+60+\ap+60/\k*\y-60/\k-60:60*\x+120+\ap+60/\k*\y-60/\k-60:\prad) -- ++(60*\x+60+\y*60/\k-60/\k+\aq:-\qrad) -- ++(60*\x+60+\y*60/\k-60/\k+\aq+60:\qrad);}
}
\end{tikzpicture}
\qquad
\begin{tikzpicture}
\def\s{0.8}
\def\k{3};
\def\prad{2.5*\s};
\def\qrad{0.54*\s};
\node (p) at (-\s,0) {};
\node (q) at (\s,0) {};
\path [name path=arcp] (p) circle (\prad);
\path [name path=arcq] (q) circle (\qrad);
\path [name intersections={of=arcp and arcq, name=i, total=\t}];
\node (r) at (i-1) {};
\pgfresetboundingbox
\node [draw] at (p) [circle through=(r)] {};
\pgfextractangle{\aq}{q}{r};
\pgfextractangle{\ap}{p}{r};
\foreach \x in {0,...,5}	{
\draw (-\s,0) ++(60*\x:2*\s) ++(60*\x+\aq:\qrad) -- ++(180+60*\x+\aq:\qrad) -- ++(60+60*\x+\aq:\qrad) arc (60*\x+60+\ap:60*\x+120+\ap:\prad) -- (-\s,0);
\foreach \y in {2,...,\k}	{
\draw (-\s,0) ++(60*\x:2*\s) -- ++(60*\x+\y*60/\k-60/\k+\aq:\qrad) arc (60*\x+60+\ap+60/\k*\y-60/\k-60:60*\x+120+\ap+60/\k*\y-60/\k-60:\prad) -- ++(60*\x+60+\y*60/\k-60/\k+\aq:-\qrad) -- ++(60*\x+60+\y*60/\k-60/\k+\aq+60:\qrad);}
}
\end{tikzpicture}
\qquad
\begin{tikzpicture}
\def\s{0.75}
\def\k{2};
\def\prad{2.65*\s};
\def\qrad{0.66*\s};
\node (p) at (-\s,0) {};
\node (q) at (\s,0) {};
\path [name path=arcp] (p) circle (\prad);
\path [name path=arcq] (q) circle (\qrad);
\path [name intersections={of=arcp and arcq, name=i, total=\t}];
\node (r) at (i-1) {};
\pgfresetboundingbox
\node [draw] at (p) [circle through=(r)] {};
\pgfextractangle{\aq}{q}{r};
\pgfextractangle{\ap}{p}{r};
\foreach \x in {0,...,5}	{
\draw (-\s,0) ++(60*\x:2*\s) ++(60*\x+\aq:\qrad) arc(60*\x+\aq:60*\x+\aq+180:\qrad/2) arc(60*\x+\aq+240:60*\x+\aq+60:\qrad/2) arc (60*\x+60+\ap:60*\x+120+\ap:\prad) arc (60*\x+\aq+120:180+60*\x+\aq+120:\qrad/2);
\draw (-\s,0) ++(60*\x:2*\s) arc(60*\x+\aq+180+30:60*\x+\aq+30:\qrad/2) arc (60*\x+60+\ap-30:60*\x+120+\ap-30:\prad) arc (60*\x+\aq+120-30:180+60*\x+\aq+120-30:\qrad/2) arc (180+60*\x+\aq+120-30+60:60*\x+\aq+120-30+60:\qrad/2);}
\end{tikzpicture}
\caption{$\widetilde{C}_{3,k}^{t*}$}\label{fig:c3kt}
\end{figure}

\subsection{$C_{n,k}^t$}

The family $C_{n,k}^t$ is the special case of $\widetilde{C}_{n,k}^{t*}$ based on a symmetric $n$\--wedge. The construction is based on Proposition~\ref{c:prop}, but this time we require the underlying $n$\--wedge to be symmetric. The families are considered separately as they have different combinatorial properties. Tilings in $C_{n,k}^t$ still contain $2nk$ tiles, but $|C_{n,k}^t|$ is more complicated and will be discussed in Section~\ref{sect:enumc}. Recall that since the wedges here are symmetric, the grooves necessarily consist of straight edges (Figure~\ref{fig:cnkt}).

\begin{figure}[htbp]\centering
\CNK{3}{2}{1.8}\qquad\CNK{3}{3}{1.8}\qquad\CNK{5}{2}{1.8}
\\\medskip\medskip
\CNKTa{2}{0.23}{1.8}\qquad\CNKTa{3}{0.2}{1.8}\qquad
\begin{tikzpicture}[scale=1.8]
\draw (0,0) circle (1);
\def\t{0.13}
\def\n{5}
\foreach \x in {1,...,10}	{
\draw (\x*180/\n:1) -- ++(\x*180/\n+180:\t) -- ++(\x*180/\n+180/\n:\t) arc (\x*180/\n+180/\n:\x*180/\n+2*180/\n:1) -- ++(\x*180/\n+180+2*180/\n:\t) -- ++(\x*180/\n+3*180/\n:\t) arc (\x*180/\n+3*180/\n:\x*180/\n+4*180/\n:1) -- (0,0);
\draw (\x*180/\n:1-\t) -- ++(\x*180/\n+18:\t) arc (\x*180/\n+180/\n-18:\x*180/\n+2*180/\n-18:1) -- ++(\x*180/\n+180+2*180/\n-18:\t) -- ++(\x*180/\n+3*180/\n-18:\t) arc (\x*180/\n+3*180/\n-18:\x*180/\n+4*180/\n-18:1) -- ++(\x*180/\n+9*180/\n-18:\t) -- ++(\x*180/\n+5*180/\n-18:\t);}
\end{tikzpicture}
\caption{$C_{n,k}^t$}\label{fig:cnkt}
\end{figure}

\subsection{Subtilings}\label{sect:subtilings}

The monohedral tilings we have classified are summarised in the following subtiling diagram where an arrow $X\to Y$ indicates that there exists a tiling in the family $X$ that is necessarily a subtiling of family member of $Y$; and $X\xhookrightarrow{} Y$ indicates that the tiling family $X$ is a special case of the tiling family $Y$ (i.e. when the tiles of $Y$ are necessarily symmetric but those of $X$ aren't). This diagram is transitive, but no arrow is drawn when there may be a subtiling. E.g. some symmetric radially generated tilings have symmetric radially generated subtilings (e.g. standard radial cut into 6 pieces is a subtiling of a standard radial cut into 3 pieces), while others don't.


\begin{center}
\includegraphics{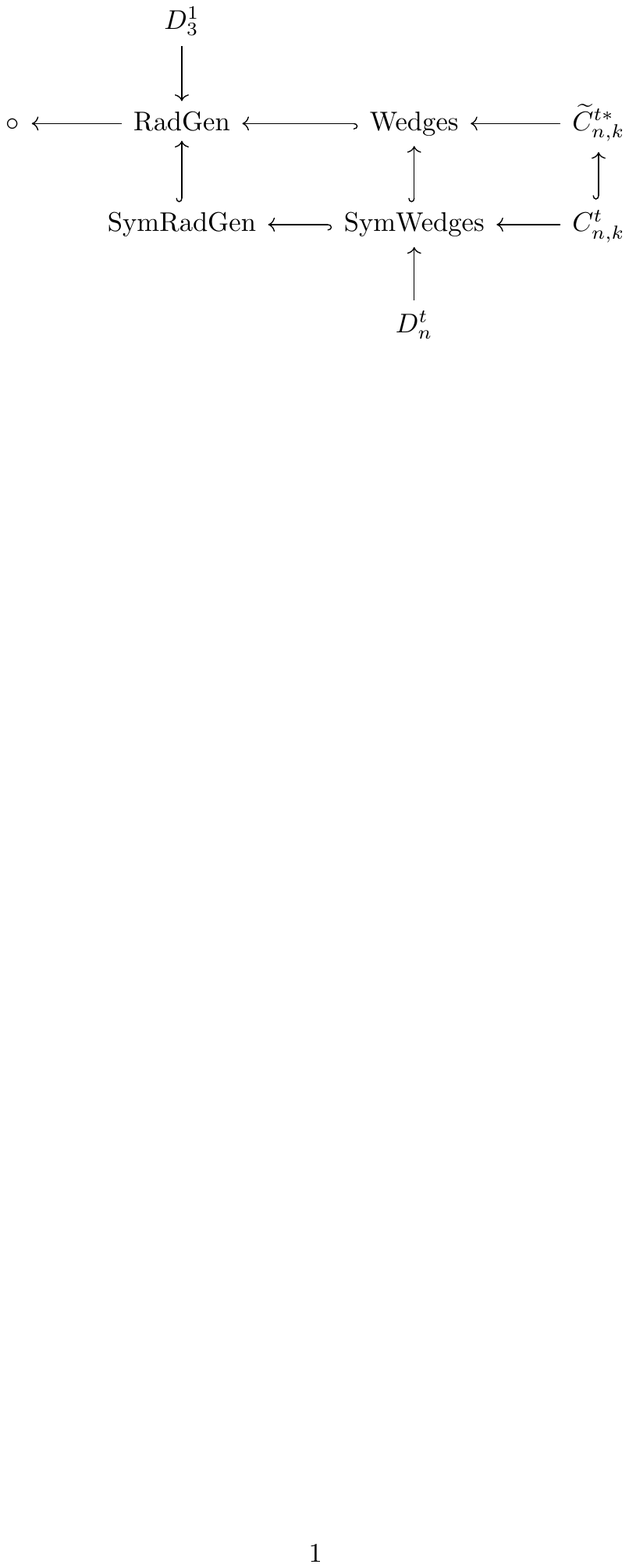}
\end{center}

\noindent Notice that even though $D_3^1$ appeared as an exceptional case when classifying $D_n^t$, it is not a subtiling of a tiling containing symmetric wedges. Its only non\--trivial subtilings are by radially generated tiles (Figure~\ref{fig:d31radgen}).
\begin{figure}[htbp]\centering
\begin{tikzpicture}[scale=1.8]
\draw (0,0) circle (1);
\foreach\x in {0,...,5}	{
\draw[name path=curve] (60*\x:1)+(60*\x+120:0.422) arc(60*(\x+1):60*(\x+2):1) -- (0,0);
\path[name path=line] (0,0) -- (60*\x+60:1);
\draw[name intersections={of=curve and line}] (60*\x+60:1) -- (intersection-1);
\draw[rotate around={60:(intersection-1)}, name intersections={of=curve and line}] (60*\x+60:1) -- (intersection-1);}
\begin{scope}[xshift=3cm]
\draw (0,0) circle (1);
\foreach \x in {0,...,5} {
\draw (60*\x:1) -- ++(60*\x:-0.422) arc(60*\x+30:60*\x+60:1) -- (0,0);}
\end{scope}
\node at (1.5,0) {$\to$};
\end{tikzpicture}
\caption{$D_3^1\to$ RadGen}\label{fig:d31radgen}
\end{figure}

\noindent However, we may consider a union of two tiles that intersect at a vertex to be radially generated by two vertices. The only way for this to happen is that $\eta\cap\eta'=\alpha_p(\eta)$ contains a point other than $p$. Since this is exactly the criteria we considered when constructing the locus $\mathcal{R}_q$ in Theorem~\ref{thm:uncountable}, the exceptional tiling $D_3^1$ is the only tiling that may appear in this way.

\section{Enumerating members of $C_{n,k}^t$}\label{sect:enumc}

Suppose in a tiling from the family $C_{n,k}^t$ we have $k$ adjacent tiles, radially generated about a common point. Then their union is a symmetric $n$\--wedge, and we may locally `flip' all of these tiles in the lines of symmetry to produce a different monohedral tiling of the disk, using copies of the same fundamental tile.

Since it takes $2n$ copies of a symmetric $n$\--wedge to tile the disk, and since the construction involves subtiling each of these with $k$ tiles, $C_{n,k}^t$ contains $2nk$ tiles. Since each tile is radially generated, they may be arranged such that each tile touches the centre of the disk and the tiling has cyclic symmetry of order $2nk$. Since this may be done in two ways (up to reflection), we choose one of these and fix this as positive orientation, and colour the tiles `black'. Then we may flip up to $2n$ unions of $k$ adjacent tiles to get a new tiling. If the number we flip is less than $2n$ then we have some freedom of which we flip, as seen in Figure~\ref{fig:c320}. We say the flipped tiles have negative orientation, and colour the tiles `red'. It is our goal to count all different tilings that may be obtained in this way.

\begin{figure}[htbp]
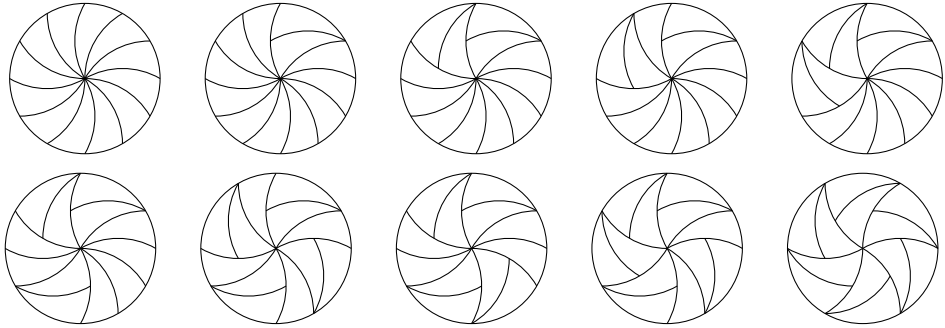
\centering
\C{1,2,3,4,5,6,7,8,9,10,11,12}{}{1}\quad
\C{1,3,4,5,6,7,8,9,10,11,12}{2}{1}\quad
\C{1,3,5,6,7,8,9,10,11,12}{2,4}{1}\quad
\C{1,3,4,6,7,8,9,10,11,12}{2,5}{1}\quad
\C{1,3,5,7,8,9,10,11,12}{2,4,6}{1}
\\\medskip
\C{1,3,5,6,7,9,10,11,12}{2,4,8}{1}\quad
\C{1,3,4,6,7,9,10,12}{2,5,8,11}{1}\quad
\C{1,3,5,6,7,9,11,12}{2,4,8,10}{1}\quad
\C{1,3,5,7,9,10,12}{2,4,6,8,11}{1}\quad
\C{2,4,6,8,10,12}{1,3,5,7,9,11}{1}
\caption{Some members of $C_{3,2}^0$}\label{fig:c320}
\end{figure}

\begin{definition}
The {\em necklace number} $\n_{a,b}$ counts the number of beaded necklaces that can made using $a$ black beads and $b$ red beads up to cyclic symmetry.
\end{definition}

\noindent Note that necklaces do not consider equivalence under reflective symmetry; such objects are called {\em bracelets}. This is the same as our equivalence of tilings, and so counting the number of tilings is equivalent to adding up appropriate necklace numbers.

For any tiling in $C_{n,k}^t$ we may count the number $ik$ of `black' tiles and the number $(2n-i)k$ of `red' tiles. The configuration of edges of the boundary of the disk corresponds exactly to a necklace; there are $i$ black edges and $(2n-i)k$ red edges. Hence we may compute the number of tilings in the family $C_{n,k}^t$ as
\[|C_{n,k}^t|=2\sum_{i=0}^{2n}\n_{i,(2n-i)k},\]
where the $2$ multiplying the sum accounts for the fact that we fixed the orientation, and any mirror image of a tiling is also a tiling.

In the exceptional case $k=1$ we have $|C_{n,1}^t|=2$ since any flips give the same tiling, so the only two family members are mirror images of each other. Table~\ref{tab:cnkt} gives some computed results.

\begin{table}[htbp]
    \centering
    \begin{tabular}{r|ccccc}
        $n=$ &3     &5     &7        &9          &11          \\\hline
    $k=1$    &2     &2     &2        &2          &2           \\
    2        &62    &1,532   &50,830    &1,855,110     &71,292,624    \\
    3        &116    &6402  &446,930   &34,121,322   &2,741,227,176  \\
    4        &200   &19,884  &2,460,462  &332,112,068  &47,162,138,964 \\
    5        &318   &51,128 &10,106,370  &2,177,193,500 &493,416,845,604
    \end{tabular}
    \caption{Some values of $|C_{n,k}^t|$}
    \label{tab:cnkt}
\end{table}

\noindent For fixed $a$ the coefficient of $x^b$ in the generating function
\[f_a(x)=\sum_{b=0}^\infty\n_{a,b}x^b=\frac{1}{a}\sum_{d|a}\frac{\varphi(d)}{(1-x^d)^{a/d}}\]
gives the necklace number $\n_{a,b}$ (see~\cite{polya}). Using the standard expansion
\[\frac{1}{(1-x^d)^{a/d}}=\sum_{r=0}^\infty\binom{r+a/d-1}{a/d-1}x^{dr},\]
we may write
\[\n_{a,b}=\frac{1}{a}\sum_{d:d|a,d|b}\varphi(d)\binom{b/d+a/d-1}{a/d-1}.\]

\begin{prop}
Combining the above arguments, we find that $|C_{n,1}^t|=2$, and for $k\ge2$ we have
\[|C_{n,k}^t|=2\sum_{i=0}^{2n}\sum_{\substack{d:d|i,\\d|(2n-i)k}}\frac{\varphi(d)}{i}\binom{(2n-i)k/d+i/d-1}{i/d-1}.\]
\end{prop}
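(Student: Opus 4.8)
The plan is to read off the stated closed form by substituting one displayed identity into another: the proposition is exactly the ``combining'' of two facts already established, namely the reduction $|C_{n,k}^t|=2\sum_{i=0}^{2n}\n_{i,(2n-i)k}$ coming from the tiling--necklace correspondence, and the divisor-sum expression
\[\n_{a,b}=\frac{1}{a}\sum_{d:d|a,\,d|b}\varphi(d)\binom{b/d+a/d-1}{a/d-1}\]
obtained above from the generating function $f_a(x)$. The only genuine work lies in the two degenerate cases that a raw substitution does not cover.

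First I would dispose of $k=1$. Here each block of the subtiling is a single symmetric $n$-wedge, so flipping it in its own line of symmetry returns the same tile in the same position; the only remaining freedom is the global orientation, giving $|C_{n,1}^t|=2$, as already recorded. This also clarifies why the divisor-sum formula is asserted only for $k\ge2$: the reduction $|C_{n,k}^t|=2\sum_i\n_{i,(2n-i)k}$ presupposes that distinct necklaces yield distinct tilings, and this holds precisely when $k\ge2$ (for $k=1$ the flip is trivial, so every necklace collapses onto one of the two mirror tilings).

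For $k\ge2$ I would substitute $a=i$ and $b=(2n-i)k$ into the necklace formula and distribute the factor $1/i$ into the inner divisor sum, so that each summand $\n_{i,(2n-i)k}$ becomes exactly $\sum_{d\mid i,\,d\mid(2n-i)k}\frac{\varphi(d)}{i}\binom{(2n-i)k/d+i/d-1}{i/d-1}$, with the leading $2$ carried through unchanged. The step needing genuine care---and the main obstacle---is the endpoint $i=0$, where the factor $1/i$ is undefined. Combinatorially $\n_{0,2nk}=1$ (the all-red necklace), so I would either start the outer sum at $i=1$ and add this contribution explicitly, or adopt the convention that the $i=0$ summand denotes $\n_{0,2nk}=1$. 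The opposite endpoint is harmless: at $i=2n$ the formula gives $\n_{2n,0}=\frac{1}{2n}\sum_{d\mid 2n}\varphi(d)=1$ by the identity $\sum_{d\mid m}\varphi(d)=m$, which is correct. With this one boundary convention fixed the displayed identity follows immediately, and the outputs can be checked against the small entries of Table~\ref{tab:cnkt}.
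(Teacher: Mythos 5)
Your proof follows essentially the same route as the paper, which offers no separate argument beyond substituting the divisor-sum expression for $\n_{a,b}$ into the identity $|C_{n,k}^t|=2\sum_{i=0}^{2n}\n_{i,(2n-i)k}$ and treating $k=1$ as the stated exception. Your additional observation that the $i=0$ summand requires a convention (the factor $1/i$ is undefined there, while combinatorially $\n_{0,2nk}=1$) is a correct and worthwhile point of care that the paper's statement glosses over.
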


\begin{definition}
Let $g$ be a function and $P$ be a predicate. We define the function
\[[g(k)]_{P(k)}=\left\{\begin{array}{cl}g(k),&P(k)\\0,&!P(k).\end{array}\right.\]
\end{definition}

\noindent For fixed values of $n$ this allows to simplify the expression for $|C_{n,k}^t|$ to a function which is the sum of a polynomial of degree $2(n-1)$, and a quasi\--periodic polynomial of lower degree. For example,
\[|C_{3,k}^t|=\frac{1}{60}k^4+\frac{5}{6}k^3+\frac{67}{12}k^2+\frac{61}{6}k+\frac{57}{5}+\left[1\right]_{2|k}+\left[\frac{8}{5}\right]_{5|k},\]

\[|C_{5,k}^t|=\frac{1}{181440}k^8+\frac{11}{1680}k^7+\frac{11527}{30240}k^6+\frac{973}{180}k^5+\frac{245269}{8640}k^3+\frac{3124847}{45360}k^2+\frac{10921}{315}k+\frac{1682}{126}+\]
\[+\left[\frac{1}{4}k+\frac{3}{2}\right]_{2|k}+\left[\frac{2}{81}k^2+\frac{10}{9}k+\frac{18}{9}\right]_{3|k}+\left[1\right]_{4|k}+\left[\frac{12}{7}\right]_{7|k}+\left[\frac{4}{3}\right]_{9|k}.\]

\section{Conclusions \& Conjectures}

We return to our list of questions from Section~1. Some of these now have definitive answers.

\begin{enumerate}
\item Tilings of type $D_n^t$ contain $4n$ tiles: $2n$ of these intersect the centre, and $2n$ of these do not. The family $\widetilde{C}_{n,k}^{t*}$ consists of tilings containing $2nk$ tiles, and this set of tiles may tile the disk in four different ways: two of these ways are radially generated tilings where every tile intersects the centre, and for the other two ways (obtained by `flipping' all $k$\--tuples of adjacent tiles forming a wedge) $2n$ tiles intersect the centre and $2n(k-1)$ do not. The situation for $C_{n,k}^t$ is a little more complicated as one may `flip' any number of adjacent $k$\--tuples of tiles forming a symmetric wedge, not necessarily all such tiles. So in this case the number of tiles intersecting the centre is reduced by $k-1$ for each flip, and the number of possible flips is $u=0,1,\ldots,2n$. Hence the number intersecting the centre is $2nk-u(k-1)$, and the number not intersecting the centre is $u(k-1)$.
\item Tilings of type $D_n^t$ for $n\ge5$ have exactly $2n$ tiles intersecting the boundary and $2n$ tiles not intersecting the boundary. The same is true for $D_3^t$ but only provided $t>0$ since each tile is $D_3^0$ intersects the boundary, either on an edge or a vertex. Similarly, since the subtiling comes from radially generated subtiling based at a vertex, the tilings $\widetilde{C}_{n,k}^{t*}$ and $\widetilde{C}_{n,k}^t$ have tiles that don't intersect the boundary only when $t>0$.
\item There are several members of $C_{n,k}^t$ such that at least one tile does not intersect the centre and the tiling has trivial cyclic symmetry, e.g. many of the images in Figure~\ref{fig:c320}.
\end{enumerate}

The remaining answers are addressed by the following conjectures, evidenced by our results.

\begin{conj}\label{conj:a}
Any monohedral tiling of the disk is a subtiling of a radially generated tiling.
\end{conj}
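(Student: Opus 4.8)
\noindent We outline a strategy by which one might hope to establish this. The plan is to reduce the conjecture to the existence of a single \emph{radial cut}. By Lemma~\ref{lem:rgtile}, any radially generated tiling of the disk consists of $N$ congruent copies of one radially generated tile $<\eta,O,2\pi/N>$ about the centre $O$, and therefore carries cyclic symmetry of order $N$. Hence the conjecture is equivalent to the following assertion: given an arbitrary monohedral tiling of the disk, there is an integer $N$ and a simple arc from $O$ to the bounding circle $C$, running along edges of the tiling, whose $N$ successive rotations by $2\pi/N$ about $O$ cut the disk into $N$ congruent radially generated sectors, each of which is a union of whole tiles. The fine tiling is then automatically a monohedral subtiling of the coarse radially generated tiling, since each sector is tiled by congruent copies of the fundamental tile; crucially, the individual sectors need not be tiled identically, which is what allows asymmetric members such as those in Figure~\ref{fig:c320}.

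First I would analyse the bounding circle $C$. Because distinct tiles meet only along their boundaries and each tile is contractible, one argues that every tile meeting $C$ does so in a single arc of $C$; congruence then forces all of these arcs to have the same radius and the same length, so that $C$ is partitioned into $N$ equal arcs and rotation by $2\pi/N$ about $O$ permutes the resulting division vertices. This furnishes the candidate order $N$ and the boundary endpoints of the sought radial cuts.

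Next I would attempt to propagate this boundary structure inward. Starting from a division vertex on $C$, one traces a path along tile edges towards the centre, and tries to show that such a path can be chosen to be simple, to terminate at $O$, and to have whole tiles on each side, so that the region swept out between consecutive cuts is a union of tiles. Here the wedge classification should control the possibilities: since a tile can be radially generated by at most two of its vertices (Theorem~\ref{thm:equalangles} and Corollary~\ref{prop:no3wedge}), the ways in which tiles may agglomerate into a larger radially generated sector are tightly constrained. One must also treat the local configuration at $O$ separately, according to whether $O$ lies in the interior of a tile, on an edge, or at a vertex where several tiles meet.

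The main obstacle is precisely the inward propagation step. The fine tiling need not possess any global symmetry at all, so the radial cuts cannot be extracted from symmetry of the given tiling and must instead be reconstructed combinatorially from the edge graph; tiles that meet neither $C$ nor $O$ must be shown to be swept up cleanly into the sectors, and one must rule out exotic interlocking of tiles near the centre that would obstruct any radial coarsening. Establishing that a consistent system of $N$ rotationally related cuts always exists, rather than merely in the families already classified, is the crux of the difficulty and the reason the statement is offered as a conjecture.
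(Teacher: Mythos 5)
The statement you are addressing is presented in the paper as a conjecture: the authors offer no proof of it, only the supporting evidence of their classification. Your submission, by your own framing, is likewise a strategy outline rather than a proof, and the crucial step is left open, so it cannot be accepted as a proof of the statement. That said, it is worth recording where the concrete gaps lie beyond the one you acknowledge. First, your boundary analysis is already unsound as stated: it is not true that every tile meeting the bounding circle $C$ does so in a single arc --- in $D_3^0$ half the tiles meet $C$ only at a vertex, and nothing in the definitions prevents a contractible tile from meeting $C$ in several disjoint arcs, or prevents congruent copies from presenting boundary arcs of different lengths to $C$ (a tile's boundary may contain more than one arc of the correct curvature). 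So the ``partition of $C$ into $N$ equal arcs'' that is supposed to furnish the order $N$ is not established. Second, your claimed equivalence of the conjecture with the existence of a single radial cut presupposes that every monohedral radially generated tiling of the disk is a cyclically symmetric arrangement of $N$ congruent tiles about the centre; Lemma~\ref{lem:rgtile} gives the construction in that direction but the paper nowhere proves the converse, so even the reduction needs an argument. Third, the inward propagation --- showing that rotationally related simple edge-paths from $C$ to the centre always exist and sweep out unions of whole tiles --- is the entire content of the conjecture, and neither you nor the paper supplies it. Your instinct to treat the statement as open and to isolate the difficulty is correct; but as a proof attempt this has genuine gaps at every stage, not only the last.
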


\begin{conj}\label{conj:b}
A subtiling of a radially generated tiling is one of the following:
\begin{itemize}
\item It is itself a radially generated tiling.
\item It may be obtained from a wedge tiling tiling by splitting a symmetric wedge in half, as in $D_n^t$, $t\in[0,1)$,
\item It may be obtained from a wedge tiling may be obtained by splitting the wedge into radially generated tiles, as in $\widetilde{C}_{n,k}^{t*}$ and $C_{n,k}^t$,
\item It is the tiling $D_3^1$.
\end{itemize}
\end{conj}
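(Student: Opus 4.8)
The plan is to fix the radially generated tiling being refined and reduce the statement to a rigidity question about a single slice. Let $\mathcal{U}$ tile the disk $D$ of radius $R$ about its centre $p$ by $n$ congruent copies of $\overline{U}=\langle\eta,p,2\pi/n\rangle$, and let $\mathcal{W}$ be a monohedral subtiling refining $\mathcal{U}$ with fundamental tile $W$. Because the slices $\overline{U}_i$ are mutually congruent and $\mathcal{W}$ is monohedral, comparing areas forces every $\overline{U}_i$ to be tiled by the same number $k$ of copies of $W$; thus $\mathcal{W}$ has $nk$ tiles, and the problem is to classify, simultaneously, the monohedral tilings of a single slice $\overline{U}$ by $W$ together with the way these fit across the radial cuts, the centre $p$, and the bounding circle. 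The listed families realise every case by construction (Propositions~\ref{d:prop} and~\ref{c:prop}, together with a radial tiling being trivially a subtiling of itself), so only completeness needs proof.

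The engine for completeness is the curvature carried by the bounding circle. The circle $\partial D$ is partitioned into finitely many arcs, each an arc of some $\partial W$ of radius $R$ centred at $p$; since $\mathcal{W}$ is monohedral these realise a fixed isometry-invariant family of radius-$R$ arcs in the boundary of $W$. First I would show that every such arc of $W$ is centred at a vertex of $W$, and that matching arcs of equal radius forces the boundary of $W$ to be built from a generating curve and a rotation of it, so that either $W$ is itself radially generated about a vertex, or two copies of $W$ sharing a straight edge combine into a radially generated tile that is symmetric, with $W$ its reflective half — precisely the $D_n^t$ construction of Proposition~\ref{d:prop}.

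With $W$ radially generated about a single vertex, the copies that meet the centre must fan about $p$ and fill angle $2\pi$, forcing their generating angle to be $2\pi/m$ for an integer $m$ (Lemma~\ref{lem:rgtile}); this gives a genuinely radial tiling, or, when adjacent copies group into blocks of $k$ that reassemble into a wedge generated about a second interior vertex, the families $C_{n,k}^t$ and $\widetilde{C}_{n,k}^{t*}$. For such a wedge to tile the disk about either of its vertices, Theorem~\ref{thm:equalangles} forces both generating angles to equal $\pi/n$ with $n$ odd, and Corollary~\ref{prop:no3wedge} forbids a third generating vertex, so no further families can occur. The sole remaining possibility is the degenerate groove length $t=1$ at $n=3$, where $\eta\cap\alpha_p(\eta)$ meets at a point other than $p$; by the argument already given in Section~\ref{sect:subtilings} this yields exactly the extra tiling $D_3^1$.

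The hard part, and the reason the statement is a conjecture rather than a theorem, is the first claim of the completeness argument: that a radially generated slice $\overline{U}$ admits no monohedral tilings beyond the radial fan of Proposition~\ref{c:prop} and the symmetric bisection of Proposition~\ref{d:prop}. This is a rigidity statement for which I know no general tool; one must rule out every exotic monohedral tiling of $\overline{U}$ — brick-like, interlocking, or spiralling arrangements of congruent tiles along its curved boundary — whereas the arc-tracking sketched above controls only the pieces meeting $\partial D$, not the interior. Making the passage from ``boundary tiles carry radius-$R$ arcs'' to ``the whole tiling is radially organised'' watertight would presumably require a global turning-angle (Gauss--Bonnet) bookkeeping around $\partial D$ fused with the local vertex analysis, and it is exactly this step that remains open.
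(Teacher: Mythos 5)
The statement you are trying to prove is stated in the paper as Conjecture~\ref{conj:b}: the paper offers no proof of it, only supporting evidence, namely the constructions of Propositions~\ref{d:prop} and~\ref{c:prop} (showing every listed family genuinely occurs), Theorem~\ref{thm:equalangles} with Corollaries~\ref{prop:no3wedge} and~\ref{cor:2} (classifying which wedges can tile the disk), and the discussion in Section~\ref{sect:subtilings} isolating $D_3^1$ as the unique case in which a union of two tiles meeting at a point other than a common generating vertex forms a wedge. Your proposal reproduces exactly this partial state of knowledge, and your routing of the exceptional case through the degenerate groove where $\eta\cap\alpha_p(\eta)$ meets away from $p$ matches the paper's own account.

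The genuine gap is the one you name yourself, and it is precisely why the paper leaves this as a conjecture: the completeness step, that a single radially generated tile $\overline{U}=\langle\eta,p,\alpha\rangle$ admits no monohedral tilings other than the radial fan of Proposition~\ref{c:prop} and the symmetric bisection of Proposition~\ref{d:prop}. Nothing in the paper supplies this, and your arc-tracking argument controls only the tiles meeting $\partial D$; it says nothing about tiles buried in the interior of a slice, and it does not even establish that a boundary tile's radius-$R$ arc is centred at one of its own vertices (for the outer tiles of $D_n^t$ the centre of curvature lies entirely outside the tile). A second unexamined reduction sits at the start: the claim that a monohedral subtiling must refine each slice $\overline{U}_i$ separately holds under the paper's definition of subtiling, but for the conjecture to carry the weight the paper wants (in combination with Conjecture~\ref{conj:a}) one must ultimately exclude monohedral tilings that respect no radial decomposition at all. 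So your write-up is an accurate map of where a proof would have to do new work, but it is not a proof, and in that respect it neither exceeds nor falls short of what the paper itself establishes.
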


\noindent According to Corollaries~\ref{prop:no3wedge} and~\ref{cor:2}, we have classified all radially generated tilings and all wedge tilings. By Propositions~\ref{d:prop} and~\ref{c:prop}, we have classified all tilings that may be obtained from wedge tilings by subtiling the wedges radially or symmetrically. So $D_3^1$ is an exceptional case in our classification but, as explained in Section~\ref{sect:subtilings}, it is exceptional in the sense that a union of two tiles is a symmetric wedge, and this may never occur for any other tiling.

Since our construction completely classifies monohedral tilings by radially generated tiles, as well as their subtilings obtained in the manner of Conjecture~\ref{conj:b}, this along with Conjecture~\ref{conj:a} imply the final conjecture.

\begin{conj}
The full classification of monohedral disk tilings is present in this paper.
\end{conj}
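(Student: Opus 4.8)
The plan is to derive the statement as a formal consequence of the two preceding conjectures, and thereby to isolate exactly what remains to be proved. By Theorem~\ref{thm:equalangles} together with Corollaries~\ref{prop:no3wedge} and~\ref{cor:2}, the radially generated tiles are completely classified: each is radially generated by at most two vertices, and in the two-vertex case the wedge is pinned down by the angle $\pi/n$ (for odd $n\ge 3$) and by the groove locus $\mathcal{R}_q$ of Theorem~\ref{thm:uncountable}. Propositions~\ref{d:prop} and~\ref{c:prop}, together with the discussion of the exceptional tiling $D_3^1$ in Section~\ref{sect:subtilings}, enumerate every monohedral subtiling of a wedge tiling that we have produced. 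Consequently, if one grants Conjecture~\ref{conj:a} (every monohedral disk tiling is a subtiling of a radially generated tiling) and Conjecture~\ref{conj:b} (the four listed subtiling types are exhaustive), then every monohedral disk tiling lies in one of the families already described, and the statement follows with no further work. So I would first record this reduction explicitly, after which the substance of a genuine proof consists entirely in establishing Conjectures~\ref{conj:a} and~\ref{conj:b}.

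Of the two, I would expect Conjecture~\ref{conj:b} to be the more tractable, and I would attempt it by a rigidity argument on a single wedge. A monohedral subtiling partitions a wedge $\overline{U}$ into mutually congruent subtiles; since $\overline{U}$ carries the arc component $\rho_p$ of fixed radius inherited from the bounding circle, every subtile touching that arc must itself contribute a sub-arc of the same radius. I would argue that congruence forces each subtile to carry an arc of this one radius, so each subtile is itself radially generated, and then that the only ways to partition $\overline{U}$ into such pieces are the reflective bisection giving $D_n^t$ and the radial subdivision by angle $\pi/nk$ giving $C_{n,k}^t$ and $\widetilde{C}_{n,k}^{t*}$. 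The curvature-matching along $\rho_p$ is what should make this enumeration finite and rigid.

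The main obstacle is Conjecture~\ref{conj:a}, and here I do not see a routine route. The difficulty is that there is no a priori reason for an arbitrary monohedral disk tiling to exhibit any radial or curvature structure at all: the space of conceivable congruent pieces is enormous, and the paper itself observes that grooves may be taken along fractal paths. The natural strategy is a global rigidity theorem proved by peeling inward from the bounding circle: every tile meeting the circle $\partial\overline{V}$ must meet it along an arc of the fixed radius $R$, and by congruence this arc is a feature of every tile; one would then try to show, by an angle- and curvature-bookkeeping argument around the centre combined with the simple connectivity of the disk, that the tiles (or unions of tiles) must coarsen to a radially generated tiling. The hard part will be controlling tiles that touch neither the centre nor the boundary circle, and ruling out exotic global configurations that satisfy all local constraints without being radially generated; it is precisely this step that the authors leave open, and why the final statement is offered as a conjecture rather than a theorem.
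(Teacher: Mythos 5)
Your reduction is exactly the paper's own reasoning: the authors state that their classification of radially generated tilings and wedge subtilings (Corollaries~\ref{prop:no3wedge} and~\ref{cor:2}, Propositions~\ref{d:prop} and~\ref{c:prop}, plus the $D_3^1$ discussion in Section~\ref{sect:subtilings}), combined with Conjectures~\ref{conj:a} and~\ref{conj:b}, implies the final conjecture, and they offer nothing beyond that reduction. You correctly identify that the statement remains a conjecture precisely because Conjectures~\ref{conj:a} and~\ref{conj:b} are open; your sketched strategies for attacking them are reasonable speculation but are not part of, and are not needed to match, the paper's argument.
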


\noindent Hence we claim that:
\begin{itemize}
\item for any monohedral tiling of the disk, the centre may only intersect a tile at a vertex. 
\item other than symmetric radially generated tilings, there is no monohedral tiling of the disk that either has a line or symmetry, or that contains an odd number of tiles.
\end{itemize}

\end{document}